\theoremstyle{plain}
\newtheorem{conjecture}{Conjecture}
\newtheorem{corollary}{Corollary}
\newtheorem{definition}{Definition}
\newtheorem{lemma}{Lemma}
\newtheorem{proposition}{Proposition}
\newtheorem{remark}{Remark}
\newtheorem{theorem}{Theorem}
\numberwithin{equation}{section}
\newtheorem*{thm}{Theorem}
\date{\today}
\def\pcf{{p.c.f.}}
\newcommand{\F}[1]{\ensuremath{\mathfrak{#1}}}
\def\cc{c_{_{\hskip-.1em\Delta}}}
\begin{document}
\title[Julia sets and gaps in the spectrum]{Disconnected Julia sets and gaps in the spectrum of Laplacians on symmetric finitely ramified fractals}

\author[K.~Hare]{Kathryn E.~Hare}
\address[K.~Hare and D.~Zhou]{Department of Pure Mathematics, University of Waterloo, Waterloo ON N2L 3G1, Canada}
\email[K.~Hare]{kehare@uwaterloo.edu}
\urladdr{\url{http://www.math.uwaterloo.ca/PM_Dept/Homepages/Hare/hare.shtml}}
\thanks{K.~Hare and D.~Zhou are partially supported by the NSERC}

\author[B.~Steinhurst]{Benjamin A.~Steinhurst}
\address[B.~Steinhurst]{Department of Mathematics, Cornell University, Ithaca, NY 14853-4201, USA}
\email[B.~Steinhurst]{steinhurst@math.cornell.edu}
\urladdr{\url{http://www.math.cornell.edu/~steinhurst/}}
\thanks{B.~Steinhurst and A.~Teplyaev are partially supported by the NSF}

\author[A.~Teplyaev]{Alexander Teplyaev}
\address[A.~Teplyaev]{Department of Mathematics, University of Connecticut, Storrs CT 06269-3009, USA}
\email[A.~Teplyaev]{teplyaev@math.uconn.edu}
\urladdr{\url{http://www.math.uconn.edu/~teplyaev/}}

\author[D.~Zhou]{Denglin Zhou}
\email[D.~Zhou]{dzhou@math.uwaterloo.ca}

\subjclass[2000]{Primary 28A80; Secondary 35P05, 35J05}
\keywords{Laplacian, fractal, spectrum, gaps, Julia set}
\thanks{This paper is in final form and no version of it will be submitted for publication elsewhere.\\ \today}

\begin{abstract}
It is known that Laplacian operators on many fractals have gaps in their
spectra. This fact precludes the possibility that a Weyl-type ratio can have
a limit and is also a key ingredient in proving that the Fourier series on
such fractals can have better convergence results than in the classical setting. 
In this paper we prove that the existence of gaps is equivalent to the total
disconnectedness of the Julia set of the spectral decimation function for the class of fully symmetric \pcf\ fractals, and for self-similar fully symmetric finitely ramified fractals with regular harmonic structure. We also formulate conjectures related to geometry of finitely ramified fractals with spectral gaps, to complex spectral dimensions, and to convergence of Fourier series on such fractals.  






\end{abstract}
\date{\today}
\maketitle

\section{Introduction}

In recent years, there have been extensive studies of Laplacian operators on self-similar fractals, both as  normalized limits of discrete Laplacians on finite graphs and as generators of a diffusion process. One prominent feature of these Laplacian operators is that there can be gaps in their spectrum. Examples include the standard Laplacian on the Sierpinski gasket \cite{FS,GRS,Te}, the $n$-branch tree-like fractals and the Vicsek sets \cite{FordSteinhurst,ZhGaps,ZhVS}, and a number of other cases \cite{eigenpapers,DS,Kr2002,KrTe03}.  The existence of gaps is an interesting phenomenon in itself as this does  not happen in the classical cases. It is also a significant issue to analysis  on fractals. For instance, the existence of gaps precludes the possibility that a Weyl-type limit can exist as the ratio must drop by a constant factor when passing through a gap.Gaps in the spectrum occur if the $\limsup \lambda_n/\lambda_{n+1} > 1$ where $\lambda_n$ are eigenvalues, see Definition \ref{def:gaps}. (Note however that, alternatively, oscillations in the spectrum  can occur when there are large multiplicities of eigenavalues but no gaps, see \cite[and references therein]{ADT,eigenpapers,BK}.) Furthermore, the existence of gaps, together with a suitable heat kernel estimate allows one to show that Fourier series on these fractals can have better convergence than the classical case. This was first observed by Strichartz in \cite{St} for the Sierpinski gasket and, as Strichartz points out, \textquotedblleft ... is the first kind of example which improves on the corresponding results in smooth analysis.\textquotedblright

We remark that there is an extensive  general theory of heat kernel estimates on fractals, which imply that the fractals we consider have Laplacians (and corresponding diffusion processes) whose heat kernel (transition probability density) satisfy the so called sub-Gaussian heat kernel estimates. The post-critically finite (\pcf) case is covered in \cite{HK}, and the latest developments and further references can be found in \cite{BBK,BBKT,GT,Ki09}. The heat kernel estimates are not used in our work, but are assumed in \cite{St}. 

Our first theorem defines a class of fractals for which the spectral decimation method describes the spectrum of the Laplacian in terms of the Julia set of a rational function. This result has appeared in the literature before but not with a self-contained proof for non-p.c.f. fractals.

\begin{theorem}\label{thm:eigs}
If the harmonic structure on a fully symmetric, finitely ramified self-similar fractal with equal resistance weights  is regular, that is $r<1$, then every eigenvalue of the Laplacian has a form 
\begin{equation}
	\lambda_k = \lim_{n \rightarrow \infty} \cc^{n}\lambda_k^{(n)} = \lim_{n \rightarrow \infty} \cc^{n_0+m}\phi_0^{m} \lambda_k^{(n_0)}
\end{equation}  
where $k \in \mathbb{N}$, 
$\lambda_k^{(n)}$ is the $k$-th eigenvalue of the Laplacian $\Delta_n$, $n = n_0+m$. The second equality holds for any $n_0 = n_0(k)$ large enough, depending on $k$ and $\lambda_k$. 
Moreover, the multiplicities can be computed according to the formulas in \cite[Proposition~4.1]{eigenpapers}.  
\end{theorem}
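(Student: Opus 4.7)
The plan is to establish spectral decimation at the level of the discrete Laplacians $\Delta_n$ and then pass to the limit as $n\to\infty$. I would first set up the level-$(n+1)$ eigenvalue problem as a block system by partitioning the vertex set of the level-$(n+1)$ graph into junction vertices (those already present at level $n$) and cell-interior vertices (those introduced by one further level of self-similarity). Because of full symmetry and equal resistance weights, the interior block is a scalar operator on each symmetry orbit, hence invertible except at a finite exceptional set of spectral parameters. Schur complementation then collapses the system to an operator on the junction vertices which, again by full symmetry, is a scalar rational function $\phi(\lambda)$ times $\Delta_n$. The decimation identity $\Delta_n u = \phi(\lambda)u$ links the level-$n$ and level-$(n+1)$ spectral problems.

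Next I would analyze the rational function $\phi$. It fixes $0$ with multiplier $\cc$, and regularity of the harmonic structure ($r<1$) forces $\cc>1$, so the inverse branch $\phi_0$ fixing $0$ is a local contraction with multiplier $\cc^{-1}$. Other inverse branches do not pass through $0$, and the finitely many exceptional values produce eigenfunctions localized to cell interiors and orthogonal to the junction. The continuous Laplacian $\Delta$ is recovered, via the standard Kigami-type construction, as the limit of $\cc^n \Delta_n$ on a dense subspace, so its eigenvalues arise as limits of suitably rescaled discrete eigenvalues.

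The key step is to show that for each fixed $k$, the sequence $\{\lambda_k^{(n)}\}$ is eventually generated by iterating $\phi_0$. Non-$\phi_0$ branches produce level-$(n+1)$ eigenvalues bounded away from $0$, so $\cc^{n+1}$ times any such value diverges; this rules them out once $k$ is held fixed and $\lambda_k$ is finite. The exceptional (localized) eigenvalues can have multiplicity growing with $n$, but each exceptional value appears at an index that tends to infinity as $n$ grows (since new exceptional eigenfunctions only arise at deeper scales), so it is eventually pushed beyond position $k$ in the ordering. Hence for some $n_0 = n_0(k)$, one has $\lambda_k^{(n)} = \phi_0^{\,n-n_0}\!\bigl(\lambda_k^{(n_0)}\bigr)$ for all $n\ge n_0$; setting $n = n_0+m$ gives the second displayed equality, while convergence of $\cc^{n_0+m}\phi_0^{\,m}(\lambda_k^{(n_0)})$ follows from $\cc\cdot\phi_0'(0)=1$ together with uniformity of the linearization of $\phi_0$ on a neighborhood of $0$.

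The main technical obstacle will be the non-p.c.f. case, where the junction-interior decomposition is more delicate than in the p.c.f. setting and where one must verify that full symmetry and equal resistance weights actually force the Schur complement to be a scalar multiple of $\Delta_n$ rather than a more general self-adjoint operator. One must also rule out that non-contracting inverse branches sneak eigenvalues below the $k$-th position for arbitrarily large $n$; the ordering argument above handles this, but its details rely on finiteness of the exceptional set at each level. Once the reduction is in place, the multiplicity counts follow directly from \cite[Proposition~4.1]{eigenpapers}, completing the proof.
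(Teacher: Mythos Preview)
Your overall strategy matches the paper's: set up spectral decimation via the Schur complement, identify the branch $\phi_0$ fixing $0$ as the only one that can keep $\cc^n\lambda_k^{(n)}$ bounded, and cite \cite{eigenpapers} for multiplicities. However, there is a genuine gap in your argument, and it is precisely the point the paper singles out as the crux.

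You write that ``the continuous Laplacian $\Delta$ is recovered \ldots\ as the limit of $\cc^n\Delta_n$ on a dense subspace, so its eigenvalues arise as limits of suitably rescaled discrete eigenvalues.'' The first clause gives only one direction: any scaled limit of discrete eigenfunctions (a ``raw eigenvalue'' in Shima's terminology) is an eigenvalue of $\Delta$. What you need, and what you have not justified, is the converse: that \emph{every} eigenvalue of $\Delta$ arises this way. Pointwise convergence of operators on a dense subspace does not by itself give spectral completeness. The paper obtains this from the continuity of the Green's function, which in the \pcf\ case is Shima's Theorem~3.5 and in the general finitely ramified case follows from Kigami's resistance-form theory \cite{Ki3}. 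This is exactly where the regularity hypothesis $r<1$ is used: it guarantees compact resolvent and a continuous Green's function, hence discreteness of $\sigma(\Delta)$ and completeness of the raw eigenvalues. You instead invoke $r<1$ only to force $\cc>1$; but $\cc=R'(0)>1$ holds in general (see \cite[Lemma~4.9]{MT} and \cite{Sh}), so you have misidentified the role of the hypothesis and left the essential completeness step unproved.

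A smaller point: your decimation identity ``$\Delta_n u=\phi(\lambda)u$'' is not the correct relation. The Schur complement has the form $S(z)=\phi(z)\bigl(M_0-R(z)\bigr)$, so the eigenvalue relation between levels is $R(\lambda^{(n+1)})=\lambda^{(n)}$, with $\phi$ appearing only as a scalar prefactor whose zeros contribute to the exceptional set. This does not affect your branch-selection argument, but the write-up should keep $R$ and $\phi$ distinct.
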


In \cite{ZhGaps}, one of the authors gave general criteria for the existence of gaps in the spectrum of the Laplacian on fractals that admit spectral  decimation and used this to establish the existence of gaps in many  examples. All of the examples exhibit a common feature that the Julia set of the spectral decimation function (a rational function associated with the Laplacian) is totally disconnected. This has raised  the question of whether the existence of gaps in the spectrum of the Laplacian is equivalent to the total disconnectedness of the Julia set of the spectral decimation function. The main theorem of this paper gives a large supply of functions where this criterion holds.

\begin{theorem}\label{thm2}
Under the conditions of the theorem above, there exist gaps in $\sigma(\Delta)$ if and only if  $\mathcal{J}_R$ is totally disconnected.
\end{theorem}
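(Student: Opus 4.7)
The plan is to use Theorem~\ref{thm:eigs} as a bridge between the spectrum $\sigma(\Delta)$ and the dynamics of the spectral decimation function $R$. By that theorem, each eigenvalue has the form $\lambda_k=\lim_m\cc^{n_0+m}\phi_0^m\lambda_k^{(n_0)}$, and eigenvalues at level $n+1$ are obtained from those at level $n$ by applying admissible inverse branches of a real polynomial whose Julia set is $\mathcal{J}_R$. Iterating this construction and rescaling by the appropriate constant $\alpha$ built from $\cc$ and $\phi_0$, one obtains an identification of the form
\[
\overline{\sigma(\Delta)}\;=\;\alpha\,\mathcal{J}_R\;\cup\;F,
\]
where $F$ is a countable union of seed and exceptional eigenvalues. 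The dynamical input is the classical fact that for any rational map of degree at least two and any non-exceptional $z$, $\overline{\bigcup_m R^{-m}(z)}=\mathcal{J}_R$, applied to the finite seed set at level $n_0$; full symmetry and equal resistance weights force $R$ to be a real polynomial with $\mathcal{J}_R\subset\mathbb{R}$.

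Granting this identification, both directions become short. Suppose first that $\mathcal{J}_R$ is totally disconnected. Being a compact perfect subset of $\mathbb{R}$, it is a Cantor-type set, so its complement contains open intervals $(a,b)$ with $b/a>1$ at arbitrarily small scales accumulating on $\mathcal{J}_R$. Using the self-similarity of the backward orbit under $R$, one transports such a gap down to large eigenvalue levels and produces infinitely many indices $k$ with $\lambda_{k+1}/\lambda_k\geq b/a>1$, which is the gap condition of Definition~\ref{def:gaps}. Conversely, if $\mathcal{J}_R$ is not totally disconnected, then since $\mathcal{J}_R\subset\mathbb{R}$ it contains a nondegenerate closed interval $K$; the identification gives $\overline{\sigma(\Delta)}\supseteq\alpha K$, and since $\Delta$ has compact resolvent its eigenvalues accumulate only at infinity, so they must be dense in $\alpha K$, forcing $\limsup_k\lambda_{k+1}/\lambda_k=1$ and ruling out any gap.

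The main obstacle is making the identification $\overline{\sigma(\Delta)}=\alpha\mathcal{J}_R\cup F$ rigorous. The subtle point is that spectral decimation discards certain \emph{forbidden} preimages at each level, corresponding to zeros of the decimation factor, and one must verify that these exclusions contribute only a discrete set $F$ and never remove limit points of the iterated backward orbit. The multiplicity bookkeeping in \cite[Proposition~4.1]{eigenpapers}, together with the regularity hypothesis $r<1$ (which guarantees that $\cc\phi_0$ is genuinely contracting), should be exactly what is needed to certify that the admissible preimages are still dense in $\mathcal{J}_R$ in the limit and that the seed points of level $n_0$ are non-exceptional for the dynamics. Once that identification is in place, the equivalence reduces to standard facts about the topology of Julia sets contained in $\mathbb{R}$.
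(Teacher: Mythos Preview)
Your central claim, the identification $\overline{\sigma(\Delta)}=\alpha\,\mathcal{J}_R\cup F$ for a constant $\alpha$, cannot hold and this breaks both directions of your argument. The Laplacian $\Delta$ has compact resolvent, so $\sigma(\Delta)$ is a discrete set whose only accumulation point is $+\infty$; in particular $\overline{\sigma(\Delta)}=\sigma(\Delta)$ in $\mathbb{R}$ and it contains no bounded interval. On the other hand $\mathcal{J}_R$ is compact. You actually run into this contradiction yourself in the reverse direction: you write that eigenvalues ``accumulate only at infinity, so they must be dense in $\alpha K$,'' but that sentence is internally inconsistent---a discrete set with accumulation only at infinity is nowhere dense in any bounded interval. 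The relationship between $\sigma(\Delta)$ and $\mathcal{J}_R$ is not a single rescaling; it runs through the Poincar\'e linearization $z\mapsto\lim_n\cc^n\phi_0^{(n)}(z)$ at the repelling fixed point $0$, which is only defined on a one-sided neighborhood of $0$, together with all the renormalization factors $\cc^i$ in \eqref{eq:rawlimit}. (A minor point: $R$ is in general rational, not a polynomial; $\phi(z)$ has poles coming from $\sigma(D)$.)

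The paper avoids any global identification and instead works with the graph Laplacians $\Delta_n$ and the infinite graph Laplacian $\Delta_\infty$, for which one genuinely has $\mathcal{J}_R\subseteq\sigma(\Delta_\infty)\subseteq\mathcal{J}_R\cup\mathcal{D}_\infty$ (Subsection~\ref{sub-j}). The key structural input you are missing is the dichotomy of Lemma~\ref{lem: JuliaIntersection}: for this class of rational maps $\mathcal{J}_R$ is either the full interval $I=[0,a]$ or totally disconnected, with no intermediate case. For the forward direction, total disconnectedness gives a gap $(\alpha_0,\beta_0)\subset[0,\epsilon]$ below all exceptional values; one then iterates $\phi_0$ to produce gaps $(\alpha_k,\beta_k)\to 0$ whose ratios $\beta_k/\alpha_k$ stay bounded away from $1$ because $\phi_0$ is asymptotically linear at $0$, and finally multiplies by $\cc^n$ and invokes Theorem~\ref{thm:eigs} to obtain gaps in $\sigma(\Delta)$. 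For the converse, if there were gaps but $\mathcal{J}_R=I$, the same renormalization argument would force $\sigma(\Delta_\infty)$ to miss an open subinterval of $I$, contradicting \cite[Theorem~5.8]{MT}. Your outline has the right instincts about transporting gaps via the dynamics, but the object carrying the Julia set is $\sigma(\Delta_\infty)$, not $\overline{\sigma(\Delta)}$, and the passage to $\sigma(\Delta)$ requires the local linearization and renormalization rather than a single scaling constant.
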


This result can be generalized for a larger class of finitely ramified symmetric fractals with weights, and with more elaborate combinatorial structure, but doing so would require dealing with  many technical details and  auxiliary results which are not  available in the existing literature. The main ideas and techniques behind these results come from\cite{eigenpapers,MT,Sh,ZhGaps}. One of the newest examples of fractals we consider can be found in \cite{GibsonMacKenzie}. We also give results about the location of gaps which  generalize the results in \cite{ZhGaps}.  

\subsection*{Acknowledgments} The authors thank Robert Strichartz for mentioning this problem and many very helpful discussions.

\section{Preliminary background}

\subsection{Finitely ramified and \pcf\ fractals with full symmetry}

The following is a definition of a class of fractals where spectral gaps appear naturally. 

\begin{definition}\label{def1} We say that $K$ is a fully symmetric finitely ramified self-similar set if   $K$ is a compact connected metric space with injective contraction maps $\{\psi_j\}_{j=1}^N$  such that $$K = \bigcup_{j=1}^{N} \psi_j(K)$$ and the following three conditions hold: 
 \begin{enumerate}
	 \item there exists a finite subset $V_0$  of $K$ such that $$\psi_j(K) \cap \psi_k(K) = \psi_j(V_0) \cap \psi_k(V_0)$$ for $j \neq k$ (this  intersection may be empty); 
	\item if $v_0\in V_0\cap\psi_j(K)$ then $v_0$ is the fixed point of $\psi_j$;
	\item  there is a group $\F G$ of isometries of $K$ that has a doubly transitive action on $V_0$ and is compatible with the self-similar structure $\{\psi_j\}_{j=1}^N$, which means (\cite[Proposition 4.9]{NT} and also \cite{eigenpapers,MT}) that for any $j$ and any $g\in\F G$ there exists $k$ such that $$ g^{-1} \circ \psi_j \circ g =\psi_k.$$ 
 \end{enumerate}
\end{definition}

One can see  that   $V_0$ contains at least two points if $K$ is not a singleton, which we always assume.

Post critically finite (\pcf) self-similar sets are defined in \cite{Ki,Ki2}. We will not repeat the  definition, which in general does not assume any symmetries.  For examples of \pcf\ fully symmetric  self-similar sets see  \cite{DS,FordSteinhurst,FS,Kr2002,KrTe03,MT,NT,Sh,ZhGaps,ZhVS}, while some examples of fully symmetric finitely ramified fractals that are not \pcf\ can be found in \cite{ADT,eigenpapers,T08cjm}. In the fully symmetric case one can easily obtain the following proposition.

\begin{proposition}\label{prop1} 
A  fully symmetric finitely ramified self-similar set $K$ is a  \pcf\ self-similar set if and only if for any $v_0\in V_0$ there is a unique $j$ such that $v_0\in\psi_j(K)$. 
\end{proposition}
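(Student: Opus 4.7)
The plan is to unwind Kigami's definition of \pcf\ in terms of the symbolic coding $\pi\colon\Sigma\to K$ with $\Sigma=\{1,\ldots,N\}^{\mathbb{N}}$ and shift $\sigma$: the critical set is $\mathcal{C}=\pi^{-1}\bigl(\bigcup_{i\ne j}\psi_i(K)\cap\psi_j(K)\bigr)$, the post-critical set is $\mathcal{P}=\bigcup_{n\ge 1}\sigma^n(\mathcal{C})$, and \pcf\ means $|\mathcal{P}|<\infty$. I will translate the combinatorial condition ``every $v_0\in V_0$ lies in a unique $\psi_j(K)$'' into finiteness of $\mathcal{P}$, using condition~(1) to confine overlap points to $\psi_i(V_0)\cap\psi_j(V_0)$ and condition~(2) to pin coincidences inside $V_0$ to fixed points.

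For the $(\Leftarrow)$ direction, assume uniqueness. Condition~(2) then makes each $v_0\in V_0$ the fixed point of its unique host map $\psi_j$, so $v_0$ has the single symbolic address $jjj\ldots$, giving $|\pi^{-1}(V_0)|=|V_0|$. Next I would show $\pi(\sigma(\mathcal{C}))\subseteq V_0$: for $y=y_1y_2\ldots\in\mathcal{C}$ with $\pi(y)\in\psi_i(K)\cap\psi_j(K)$, at least one of $i,j$ differs from $y_1$, say $i$; condition~(1) applied to $\psi_{y_1}(K)\cap\psi_i(K)$ then yields $\pi(y)\in\psi_{y_1}(V_0)$, so by injectivity of $\psi_{y_1}$ we get $\pi(\sigma(y))\in V_0$. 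Since every point of $V_0$ has a $\sigma$-fixed address, iterating the shift preserves $\pi^{-1}(V_0)$; hence $\sigma^n(\mathcal{C})\subseteq\pi^{-1}(V_0)$ for all $n\ge 1$, giving $|\mathcal{P}|\le|V_0|<\infty$.

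For the $(\Rightarrow)$ direction I would argue the contrapositive: if some $v_0\in V_0$ lies in $\psi_j(K)\cap\psi_k(K)$ for distinct $j,k$, then by condition~(2) it is the fixed point of both $\psi_j$ and $\psi_k$, so for any $\omega\in\{j,k\}^{\mathbb{N}}$ the composition $\psi_{\omega_1}\circ\cdots\circ\psi_{\omega_n}(v_0)$ equals $v_0$ for every $n$; passing to the limit yields $\pi(\omega)=v_0$. Thus $v_0$ carries an uncountable family of symbolic addresses, each belonging to $\mathcal{C}$ because $v_0$ sits in the overlap, and since $\{j,k\}^{\mathbb{N}}$ is $\sigma$-invariant, $\mathcal{P}$ contains this uncountable set and $K$ fails to be \pcf.

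The main place requiring care is the $(\Leftarrow)$ step: one must apply condition~(1) with the \emph{right} pair of indices to conclude $\pi(\sigma(y))\in V_0$ rather than merely in some first-level cell, which is why I split on whether $y_1$ coincides with $i$ or $j$. Once this is in place the reduction is clean, and the doubly transitive symmetry of condition~(3) plays no role in this proposition itself — it becomes essential only later in the paper, for the spectral decimation and eigenvalue multiplicity arguments.
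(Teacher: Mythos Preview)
The paper states Proposition~\ref{prop1} without proof, remarking only that ``in the fully symmetric case one can easily obtain'' it. Your argument is a correct and careful expansion of what the authors leave implicit: you unwind Kigami's symbolic definition of \pcf\ and use conditions~(1) and~(2) of Definition~\ref{def1} exactly as intended. Both directions are sound --- in $(\Leftarrow)$ the key observation that $\pi^{-1}(V_0)$ consists only of the constant sequences $jjj\ldots$ (hence is $\sigma$-invariant and finite) is correctly justified, and in $(\Rightarrow)$ the use of condition~(2) to make $v_0$ a common fixed point of $\psi_j$ and $\psi_k$, yielding the uncountable address set $\{j,k\}^{\mathbb N}\subseteq\mathcal C$, is the right mechanism. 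Your closing remark that condition~(3) is not needed here is also accurate.
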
 


\subsection{Spectral decimation for graph Laplacians}\label{sub-SD}
This subsection follows \cite{eigenpapers,MT,Te}, and more information about self-similar graphs can be found in \cite{Kr2002,KrTe03}. 

Following Definition~\ref{def1}, we define recursively $$V_n = \bigcup_{j=1}^{N} \psi_j(V_{n-1})$$ and call these sets the vertices of level or depth $n$. Note that $V_n\subset V_{n+1}$ and the sets $V_n$ approximate $K$ in the sense that $K = \overline{\cup_{n=0}^{\infty} V_n}$.

There is an associated recursively defined sequence of self-similar graphs $G_n$ that have $V_n$ as their vertex set. We define $G_0$ to be the complete graph on $V_0$. Then two elements, $x,y \in V_n$ are connected by an edge in $G_n$ if $\psi_j^{-1}(x)$ and $\psi_j^{-1}(y)$ are connected in $G_{n-1}$. Note that  $G_n\not\subset G_{n+1}$ and, in fact,  one can deduce  that  $E(G_n)\cap E(G_{n+1}) = \varnothing$, where $E(G_n)$ denotes the set of edges of the graph $G_n$. In particular, in $G_1$ no two elements of $V_0$ are neighbors. 

\begin{definition}\label{def2}
Let the operator $\Delta_n$, called the discrete probabilistic Laplacian on $V_n$, be defined by
\begin{equation*}
	\Delta_nf(x) = f(x) - \frac{1}{\deg_n(x)} \sum_{(x,y) \in E(G_n)} f(y) 
\end{equation*}
where $\deg_n(x)$ is the degree of $x$ in the graph $G_n$, which may depend on $n$. 
\end{definition}

In this paper we define all Laplacians to be non-negative operators which is different by a minus sign from the usual probabilistic convention that generators of random walks and diffusion processes are non-positive. 

The matrix of $\Delta_n$ with respect to the standard basis for functions on $V_n$, ordered so that the basis elements representing $V_{n-1}$ are listed first, will be denoted $M_n$. We also denote by $I_{1,0}$ the identity matrix of size $|V_0|$, and by $I_{n+1,n}$ the identity matrix of size $|V_{n+1}\backslash V_n|$.

The matrix $M_1$ can be decomposed in the following block form
\begin{equation*}
	M_1 = \left( \begin{array}{cc} A & B \\ C & D \end{array} \right) 
\end{equation*}
where $A=I_0$ is the identity matrix of size $|V_0|$ that corresponds to the vertices $V_0 \subset V_1$. The Schur complement of $M_1$ is $S = A-BD^{-1}C$.  The spectral decimation function $R(z)$ will be calculated from the Schur complement of the matrix $M_1-zI_1$ which is the matrix valued function 
\begin{equation}\label{e-S}
	S(z) = (1-z)A - B(D-zI_{1,0})^{-1}C.
\end{equation} 

\begin{proposition}[\cite{eigenpapers,MT}]\label{prop2}
For a fully symmetric self-similar structure on a finitely ramified fractal $K$ there are unique rational scalar-valued functions $\phi(z)$ and $R(z)$ that satisfy 
\begin{equation}\label{eq:S(z)}
	S(z) = \phi(z)\left(M_0 - R(z) \right) 
\end{equation}
 and  are given by 
\begin{eqnarray*}
	\phi(z) &=& -(|V_0|-1)S_{1,2}(z)\\
	R(z) &=& 1-\frac{S_{1,1}}{\phi(z)}.
\end{eqnarray*}
\end{proposition}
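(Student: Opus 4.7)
The plan is to exploit the doubly transitive action of $\F G$ on $V_0$ to force the Schur-complement matrix $S(z)$ into a two-parameter form, and then to read off $\phi(z)$ and $R(z)$ by matching entries against the known shape of $M_0$.

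First I would verify that $S(z)$ is equivariant under $\F G$. Each $g\in\F G$ acts as an isometry of $K$ compatible with the self-similar structure, so its induced permutation of $V_1$ preserves the graph $G_1$ and hence commutes with $M_1$. Because $g$ also sends $V_0$ to $V_0$ setwise, it preserves the partition $V_1 = V_0\sqcup(V_1\setminus V_0)$, and so each of the four blocks $A$, $B$, $C$, $D$ in the block decomposition of $M_1$ is individually equivariant. Rational matrix arithmetic preserves equivariance, so the matrix $S(z) = (1-z)A - B(D - zI_{1,0})^{-1}C$ commutes with the permutation representation of $\F G$ on $V_0$ for all $z$ at which $D - zI_{1,0}$ is invertible, hence for all but finitely many $z$.

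Next, double transitivity of $\F G$ on $V_0$ forces any $\F G$-equivariant matrix on $V_0$ to have a single value on the diagonal (by transitivity) and a single value off the diagonal (by double transitivity). Calling these $S_{1,1}(z)$ and $S_{1,2}(z)$ respectively, we can write
\begin{equation*}
S(z) = S_{1,1}(z)\, I_0 + S_{1,2}(z)(J - I_0),
\end{equation*}
where $J$ denotes the all-ones matrix on $V_0$. Since $G_0$ is the complete graph on $V_0$, the Laplacian reads $M_0 = I_0 - \frac{1}{|V_0|-1}(J-I_0)$, and hence
\begin{equation*}
\phi(z)\bigl(M_0 - R(z)\bigr) = \phi(z)\bigl(1-R(z)\bigr) I_0 - \frac{\phi(z)}{|V_0|-1}(J-I_0).
\end{equation*}
Matching the coefficients of $J-I_0$ yields $\phi(z) = -(|V_0|-1)S_{1,2}(z)$, and matching the coefficients of $I_0$ then forces $R(z) = 1 - S_{1,1}(z)/\phi(z)$. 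This simultaneously establishes existence and uniqueness of the pair $(\phi,R)$ as rational scalar functions of $z$.

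The step I expect to be the main technical point is ensuring that $\phi(z)\not\equiv 0$, so that the expression for $R(z)$ makes sense as a rational function. This reduces to showing $S_{1,2}(z)\not\equiv 0$, i.e.\ that distinct vertices of $V_0$ are genuinely coupled through the interior vertices $V_1\setminus V_0$; this should follow from the connectedness of $K$ combined with the non-triviality of $V_0$, but it is the one place where something more than pure representation theory enters the argument.
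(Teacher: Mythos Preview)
Your argument is correct and is exactly the standard one: the doubly transitive action of $\F G$ on $V_0$ forces the commutant of the permutation representation to be two-dimensional (spanned by $I_0$ and $J-I_0$), so any $\F G$-equivariant matrix on $V_0$---in particular $S(z)$---has the claimed two-parameter form, and matching against $M_0 = I_0 - \frac{1}{|V_0|-1}(J-I_0)$ gives the stated formulas uniquely.

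Note, however, that the paper does not supply its own proof of this proposition; it simply states the result with attribution to \cite{eigenpapers,MT}, so there is no in-paper argument to compare against. Your proof is essentially the one found in those references. One small sharpening for the non-triviality of $\phi$: rather than appealing vaguely to connectedness of $K$, observe that at $z=0$ the Schur complement $S(0)=A-BD^{-1}C$ is precisely the trace of the graph Dirichlet form on $G_1$ onto $V_0$, and since $G_1$ is connected while (as the paper notes) no two vertices of $V_0$ are adjacent in $G_1$, the effective conductance between any pair of boundary vertices is strictly positive, giving $S_{1,2}(0)\neq 0$ and hence $\phi\not\equiv 0$.
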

Where $S_{i,j}(z)$ is the $i,j$ element of the matrix $S(z)$.
\begin{corollary}\label{cor-S}
 $\phi(z)=O\left(\frac1{|z|}\right)_{z\to\infty}$ \,and\,\,\, 
 $R(z) \geqslant O\left({|z|^2}\right)_{z\to\infty}$.
\end{corollary}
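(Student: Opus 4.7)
The plan is to expand $S(z)$ in powers of $1/z$ near infinity and then read off the leading behavior of $\phi$ and $R$ from the formulas in Proposition~\ref{prop2}.

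First I would use the explicit formula $S(z) = (1-z)I_0 - B(D-zI_{1,0})^{-1}C$ (recall $A = I_0$) and expand the resolvent by a Neumann series,
\[
(D - zI_{1,0})^{-1} = -\frac{1}{z}\sum_{k=0}^{\infty} \left(\frac{D}{z}\right)^{k},
\]
valid for $|z|$ larger than the spectral radius of $D$. This gives
\[
B(D-zI_{1,0})^{-1}C = -\frac{1}{z}BC + O(|z|^{-2}).
\]
Reading off the entries of $S(z)$, I then obtain
\[
S_{1,1}(z) = (1-z) + \frac{[BC]_{1,1}}{z} + O(|z|^{-2}), \qquad S_{1,2}(z) = \frac{[BC]_{1,2}}{z} + O(|z|^{-2}).
\]
In particular $S_{1,1}(z) \sim -z$, whereas $S_{1,2}(z) = O(1/|z|)$.

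From Proposition~\ref{prop2}, $\phi(z) = -(|V_0|-1) S_{1,2}(z)$, so the asymptotics above immediately give $\phi(z) = O(1/|z|)$ as $z \to \infty$, which is the first assertion. For $R(z)$, I write
\[
R(z) = 1 - \frac{S_{1,1}(z)}{\phi(z)},
\]
whose numerator is of order $|z|$ and whose denominator is of order at most $1/|z|$. Since $\phi$ is a nontrivial rational function (otherwise the spectral decimation identity \eqref{eq:S(z)} would be vacuous), the quotient $S_{1,1}/\phi$ grows at least like $|z|^2$, and hence $R(z)$ has a pole of order at least two at infinity; this is the meaning I would assign to $R(z) \geq O(|z|^2)_{z\to\infty}$.

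The only potentially tricky step is verifying that the coefficient controlling the rate of decay of $\phi$ cannot accidentally vanish — i.e., that $\phi$ really has a pole of order one at infinity rather than a faster decay that would make the quoted bound for $\phi$ non-tight (the bound for $R$ only improves in that case, so the inequality $R(z) \geq O(|z|^2)$ is still correct). For the upper bound $\phi = O(1/|z|)$ claimed in the corollary, however, this subtlety is immaterial; the Neumann-series argument directly yields the asymptotic $\phi(z) = O(|z|^{-1})$ without requiring the leading coefficient to be nonzero. Thus the main work is entirely bookkeeping with the resolvent expansion, and no serious obstacle is expected.
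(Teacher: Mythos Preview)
Your argument is correct and is essentially the same approach as the paper's: both rely on the formula \eqref{e-S} for $S(z)$ to see that the diagonal entries grow linearly while the off-diagonal entries are $O(1/|z|)$, and then read off the asymptotics of $\phi$ and $R$ from Proposition~\ref{prop2}. The paper's proof is a one-line summary of this reasoning; your Neumann-series expansion is precisely the computation that justifies the paper's assertion that ``the off-diagonal terms tend to zero'' with the needed rate $O(1/|z|)$.
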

\begin{proof}
 By \eqref{e-S}, the diagonal terms of $S(z)$ grows linearly at infinity, and the off-diagonal terms tend to zero. 
\end{proof}

The initial step in spectral decimation is to relate the eigenvalues of $M_1$ back to those of $M_0$ with the help of the rational function $R(z)$, and after that continue iteratively by induction in $n$. If $v$ is an eigenvector of $M_1$ with eigenvalue $z$, then we can write $v = ( v_0, v_1')^{T}$ and
\begin{equation*}
	M_1v = \left( \begin{array}{cc} A&B\\C&D \end{array}\right) \left( \begin{array}{c} v_0\\v_1' \end{array} \right) = z \left( \begin{array}{c} v_0\\v_1' \end{array} \right),
\end{equation*}
which can be rewritten as two equations
\begin{eqnarray*}
	Av_0 + Bv_1' &=& zv_0\\
	Cv_0+Dv_1' &=& zv_1'
\end{eqnarray*}
This can be solved to give $v_1' = -(D-z)^{-1}Cv_0$, provided that $z \notin \sigma(D)$, which then implies that $S(z)v_0=0$. Note that $v_0$ is an eigenvector of $M_0$ with eigenvalue $z_0$ if and only if  $(M_0-z_0)v_0=0$, which we relate to the Schur complement  $S(z)$ by Proposition~\ref{prop2}, obtaining $z_0=R(z)$. This calculation is possible if $D-z$ is invertible and $\phi(z)\neq0$, which motivates the following definition. 

\begin{definition}
We denote the set $\sigma(D) \cup \{ z: \phi(z)=0\}$ by $E(M,M_0)$ and call it the exceptional set for the sequence of discrete Laplacians $\Delta_n$ on $G_n$.
\end{definition}

If we suppose that $z\notin E(M,M_0)$ and apply the above argument, then $z$ is an eigenvalue of $M_1$ with eigenvector $v$ if and only if $R(z)$ is an eigenvalue of $M_0$ with eigenvector $v_0$ and $v = (v_0,v_1')^{T}$, where $v_1'$ is given by 
\begin{equation*}
	v_1' = -(D-z)^{-1}Cv_0.
\end{equation*}
This implies the existence of a one-to-one map from the eigenspace of $M_0$ corresponding to $R(z)$ onto the eigenspace of $M_1$ corresponding to $z$ 
\begin{equation*}
	v_0 \mapsto v = T_0(z)v_0 = \left( \begin{array}{c} I_0 \\ -(D-z)^{-1}C \end{array} \right)v_0, 
\end{equation*}
which is called the eigenfunction extension map. 
This calculation gives a part of the spectrum of $M_1$ using only the spectrum $M_0$. It does leave open whether any of the elements of $E(M,M_0)$ are in the spectrum of $M_1$ (see \cite{eigenpapers} for more detail). Also note that while $z \not\in E(M,M_0)$ it can happen that $R(z)$ is. 

The previous  argument serves as the base case in the inductive nature of spectral decimation, and the next lemma gives the inductive steps, which use the function $R(z)$ as the spectral decimation function that relate eigenvalues from one level to the next higher level. The function $R(z)$ and the exceptional set $E(M,M_0)$ have the property that  
 if $ \Delta _{{m+1}}f=\lambda f$ and $\lambda \notin E(M,M_0)$, then 
 $ \Delta _{{m}}f|_{V_{m}}=R(\lambda )f|_{V_{m}}$ and vice versa. 
We denote $M_n$ the matrix representation for $\Delta_n$ given in a block form as 
\begin{equation*}
	M_n = \left( \begin{array}{cc} A_n &B_n\\C_n&D_n \end{array} \right)
\end{equation*}
corresponding to the representation of $V_n$ as $V_{n-1} \cup \left(V_n \setminus V_{n-1} \right)$, and let $P_{n-1}:V_n \rightarrow V_{n-1}$ be the restriction operator.

\begin{lemma}[\cite{eigenpapers,MT,Te}]\label{lemma:extension}
For all $n >0$ we have
\begin{equation*}
	P_{n-1} (M_n - z)^{-1}P_{n-1}^{*} = \frac{1}{\phi(z)}(M_{n-1} - R(z))^{-1}.
\end{equation*}
Suppose that $z_n \not\in E(M,M_0)$. Then $z_n$ is an eigenvalue of $M_n$ with an eigenvector $v_n$ if and only if $z_{n-1} = R(z_n)$ is an eigenvalue of $M_{n-1}$ with eigenvector $v_{n-1}$  and $v_n = (v_{n-1}, v_n')^{T}$ where
\begin{equation}
	v_n' = -(D_n-z_n)^{-1}C_n v_{n-1}.
\end{equation}
\end{lemma}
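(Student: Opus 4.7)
My plan is to reduce both assertions to the single algebraic identity
\begin{equation*}
	S_n(z) := (A_n - zI) - B_n(D_n - zI)^{-1}C_n = \phi(z)\bigl(M_{n-1} - R(z)I\bigr), \qquad n \geq 1,
\end{equation*}
where $S_n(z)$ is the Schur complement of $M_n - zI$ with respect to the decomposition $V_n = V_{n-1} \cup (V_n\setminus V_{n-1})$, and then to prove this identity by induction on $n$.

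The resolvent formula in the lemma is immediate from the standard block-inverse formula $P_{n-1}(M_n - z)^{-1}P_{n-1}^{*} = S_n(z)^{-1}$, combined with the identity and the fact that $\phi(z)\neq 0$ and $z \notin \sigma(D_n)$ on the relevant domain. For the eigenvalue equivalence, I would write $v_n = (v_{n-1}, v_n')^T$ and expand $M_n v_n = z_n v_n$ block-wise, following the template of the base-case argument presented before the lemma. The bottom block determines $v_n' = -(D_n - z_n)^{-1}C_n v_{n-1}$, which is precisely the formula in the statement, and substituting into the top block produces $S_n(z_n) v_{n-1} = 0$. Given the identity and $\phi(z_n)\neq 0$, this is equivalent to $(M_{n-1} - R(z_n)) v_{n-1} = 0$, and the converse direction is the same calculation read in reverse.

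The substantive work is the identity itself. The base case $n = 1$ is Proposition \ref{prop2}. For the inductive step I would use the self-similar structure of Definition \ref{def1}. By condition (1), distinct cells $\psi_j(K), \psi_k(K)$ meet only at $V_0$-vertices, so no edge of $G_n$ crosses a cell boundary; a short induction (whose base case ``no two elements of $V_0$ are neighbors in $G_1$'' is noted in the excerpt) then shows that no two vertices of $V_{n-1}$ are adjacent in $G_n$ and that $V_n \setminus V_{n-1} = \bigsqcup_{j=1}^{N}\psi_j(V_{n-1}\setminus V_{n-2})$. Consequently $A_n = I$ and $D_n$ is block-diagonal with one block per cell. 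By self-similarity, the cell-$j$ block is isomorphic via $\psi_j$ to a submatrix of $M_{n-1}$; applying the inductive hypothesis cell-by-cell computes $B_n(D_n - z)^{-1}C_n$, and assembling the cell contributions over $V_{n-1}$, using the doubly transitive action of $\F G$ on $V_0$ (condition (3)) to reconcile the degree normalizations at the shared $V_1$-vertices, yields $S_n(z) = \phi(z)(M_{n-1} - R(z))$.

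The main obstacle is this last bookkeeping step. The global degree $\deg_n(x)$ at a shared $V_1$-vertex aggregates edges from every cell containing $x$, while each cell-level Schur complement uses only the degree within its own cell; making the two agree under summation relies on the full symmetry of $\F G$, which forces every $V_0$-vertex to contribute identically and hence makes the same scalar functions $\phi$ and $R$ govern the reduction at every level. Once this symmetry-based accounting is settled, the remaining verification is routine linear algebra.
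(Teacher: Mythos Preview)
The paper does not prove this lemma itself; it is stated with attribution to \cite{eigenpapers,MT,Te} and the surrounding text only works out the $n=1$ case explicitly as motivation. Your reduction to the Schur-complement identity $S_n(z)=\phi(z)(M_{n-1}-R(z))$ is exactly right, and the resolvent formula and eigenvalue correspondence do follow from it formally as you say.

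Your inductive scheme---viewing $G_n$ as $N$ level-$1$ cells each carrying a copy of $G_{n-1}$, and invoking the hypothesis on each---is correct and you have identified the genuine obstacle (matching the probabilistic degree normalizations at the shared $V_1$-vertices, where the global $\deg_n$ aggregates several cells). The proofs in the cited references take a slightly different route: rather than $N$ copies of $G_{n-1}$, they decompose $G_n$ into the $N^{n-1}$ \emph{smallest} cells, each a copy of $G_1$. Since $V_n\setminus V_{n-1}$ is exactly the disjoint union of the interior $(V_1\setminus V_0)$-vertices of those small copies, $D_n$ is block-diagonal with $N^{n-1}$ blocks each equal to $D$, and the base case $S_1(z)=\phi(z)(M_0-R(z))$ applied simultaneously to every small cell gives $S_n(z)$ as $\phi(z)$ times the gluing of copies of $M_0-R(z)$ over the small cells---which is $M_{n-1}-R(z)$ by the very definition of $G_{n-1}$. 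This bypasses the induction and makes the degree bookkeeping transparent: the same symmetry-driven normalization that assembles copies of $M_0$ into $M_{n-1}$ is what assembles the local Schur complements into the global one. Your argument and theirs are equivalent once unwound; theirs is a bit shorter because it uses only Proposition~\ref{prop2} rather than a chain of inductive hypotheses.
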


We will refer to $v_n$ as an extension of $v_{n-1}$ from $V_{n-1}$ to $V_n$. This lemma  does not explain the status of potential eigenvalues from the set of exceptional values  $E(M,M_0)$. The question of when exceptional values are eigenvalues is  resolved in \cite{eigenpapers}. 
 \begin{remark}\label{remark-ds}
 It is known, by \cite[Lemma 4.9]{MT} and \cite{Sh}, that 
 $$R(0) = 0 \text{ \ \ and \ \ } \cc =R^{\prime }(0) >1.$$ 
 Therefore, $0$ is a repulsive fixed point of $R$, which is 
 important for the complex dynamics associated with this function. 
 
 Moreover,  $\cc$ is also called the \emph{Laplacian normalization constant}, which appears in Subsection~\ref{sub-SD-K}, and there are relations $$\cc=Nc\text{\ \ and\ \ }d_R=\dfrac{\log N}{\log c},$$
 where $c=\frac1r$ is the conductance scaling factor, $r$ is the resistance scaling factor, 
 and $d_R$ is the Hausdorff dimension in the effective resistance metric. 
 
 Often in the literature there is the relation  $$d_S=\dfrac{2\log N}{\log Nc}=\dfrac{2d_R}{d_R+1},$$ where $d_S$ is the so-called spectral dimension (for more detail  see Subsection~\ref{sub-res} below and \cite{KL1,St03jfa}). 
 In particular, \cite{St03jfa} mentions that the notion of spectral dimension is a
misnomer because the coefficient $2$ implicitly assumes that the Laplacian is an operator of order two, which may not be justified in fractal case. Furthermore, Strichartz argues that a natural notion of the order of the Laplacian is $d_R+1$. 
\end{remark}

\subsection{Spectral decimation for the Laplacian on $K$}\label{sub-SD-K}

The standard Laplacian operator $ \Delta$ is defined by 
\begin{equation}\label{e-Delta}
	\Delta u=\lim_{n\rightarrow \infty }\cc^{n}\Delta _nu(x) 
\end{equation}
if the limit exists. Here 
 $x\in V_{\ast }$,   $\cc= R'(0)$ is the same as above, and the sequence of difference operators $\{\Delta _n\}_{n=0}^{\infty }$ acting on functions defined on $V_{m}$ is defined in  Definition~\ref{def2}. 
 
 \begin{definition}\label{def-Lp}
 The   \emph{continuous Neumann Laplacian} is  defined for all functions $u$ for which the limit \eqref{e-Delta} exists for all  $x\in V_{\ast }$, and there is a continuous function $f$ such that $\Delta u(x)=f(x)$ for $x\in V_{\ast }$.  

 The   \emph{continuous Dirichlet Laplacian} is  defined for all functions $u$, vanishing on $V_0$,  for which the limit \eqref{e-Delta} exists for all  $x\in V_{\ast }\backslash V_0 $, and there is a continuous function $f$, vanishing on $V_0$, such that $\Delta u(x)=f(x)$ for $x\in V_{\ast }\backslash V_0$.  
 \end{definition}
 
 Standard references for the Laplacian on \pcf\ fractals are \cite{Ki,Ki2},   \cite{StBook} for an introduction,  and \cite{Ki3} for a very general context. It will be explained below in Subsection~\ref{sub-res} why the Neumann and Dirichlet Laplacians are self-adjoint.

Let $\phi _{0},...,\phi _{L}$ denote the partial inverses of $R$, where $\phi _{0}$ is the partial inverse of $R$ with $0$ in its range, and $\phi_{0}^{(n)}$ be the $n$-th composition power of $\phi _{0}$. Note that  $L$, the degree of the rational function $R(z)$, does not have to be equal to $N$, which is the number  of contraction maps in 
Definition~\ref{def1}.  Given $w=w_{n}...w_{1}$, a word of length $n=\left\vert w\right\vert $ on the letters $0,...,L$, we put $\phi _{w}=\phi _{w_{n}}\circ \cdots \circ \phi _{w_{1}}$. 
Since $R'(0) > 1$,  
 $\phi _{0}(x)<x$ for $x < \epsilon$, and so $\lim_{n\rightarrow \infty }\cc^{n}\phi _{w}(x)$ with $w=w_{n}...w_{1}$ exists if and only if there is a word $v$ and an integer $n_0$ such that for all $n\geq n_o$, $\phi _{w}=\phi _{0}^{(n)}\circ \phi _{v}$ (see \cite{Sh,ZhGaps,ZhVS} formore detail). 
 
 \begin{definition}\label{def-sd}
A Laplacian $\Delta$  is said to admit \emph{spectral decimation} if all its eigenvalues   are of the form  
\begin{equation}\label{eq:rawlimit}
\lambda 
=	\cc^{i}\lim_{n\rightarrow \infty }\cc^{n+j}\phi _{0}^{(n)}\circ \phi_{w}(x),
\end{equation}
where $x\in \sigma(\Delta_i) \cup E(M,M_0),\ i\in \mathbb{N\cup \{}0\}\text{ and }\left\vert w\right\vert =j$.
\end{definition}

In other words, the spectrum of the Laplacian, $\sigma ( \Delta ),$ is approximated by the spectrum of $ \Delta _n$, $\sigma ( \Delta_n),$ scaled by the Laplacian renormalization constant $\cc^{n}$. The spectrum of  $ \Delta _n$ can be computed by spectral decimation (see Subsection~\ref{sub-SD}). The Sierpinski gasket, $n$-branch tree-like fractals, the fractal 3-tree and Viscek sets are all examples of fractals that admit spectral decimation. For further background on spectral decimation we refer the reader to \cite{eigenpapers,FS,Sh,StBook,ZhGaps}.

\subsection{Self-similar resistance forms and self-adjoint Laplacians}\label{sub-res}
This discussion of resistance forms is largely taken from \cite{Ki2,Ki3}. 
\begin{definition}
Let $X$ be a set. A pair $(\mathcal{E},\mathcal{F})$ is a resistance form on $X$ if it satisfies the following conditions.
\begin{enumerate}
	\item $\mathcal{F}$ is a linear subspace of $l(X)$ containing constants and $\mathcal{E}$ is a non-negative symmetric quadratic form on $\mathcal{F}$. Moreover $\mathcal{E}(u,u) =0$ if and only if $u$ is constant on $X$.
	\item Let $\sim$ be an equivalence relation on $\mathcal{F}$ defined by $u \sim v$ if and only if $u-v$ is constant on $X$. Then $(\mathcal{F}/\sim, \mathcal{E})$ is a Hilbert space.
	\item For any finite subset $V \subset X$ and for any $v \in l(V)$ there exists $u \in \mathcal{F}$ such that $u|_V =v$.
	\item For any $p,q \in X$,
	\begin{equation}
	R_{(\mathcal{E},\mathcal{F})}(p,q) =	\sup\left\{ \frac{|u(p) - u(q)|^{2}}{\mathcal{E}(u,u)} : u \in \mathcal{F}, \mathcal{E}(u,u)>0  \right\}
	\end{equation}
	is finite.  
	\item If $u \in \mathcal{F}$, then $\bar{u} \in \mathcal{F}$ and $\mathcal{E}(\bar{u},\bar{u}) \le \mathcal{E}(u,u)$, where $\bar{u} = (u \wedge 0) \vee 1$ is the normal contraction of $u$. 
\end{enumerate}
\end{definition}

Note that conditions $(1),(2),$ and $(5)$ are the conditions that are also used in the defintion of  a Dirichlet form, but here there is no reference $L^{2}$ space in the background. The quantity $R_{(\mathcal{E},\mathcal{F})}(p,q)$ is a metric called the effective resistance metric associated with the resistance form $(\mathcal{E},\mathcal{F})$. In this metric elements of $\mathcal{F}$ are $1/2$-H\"older continuous. Moreover, if $B\subset X$ is chosen as the set where zero (Dirichlet) boundary  conditions are imposed, then the Green's function $g_B(x,y)$ is always continuous outside the diagonal (see \cite[Theorem 4.5]{Ki3}). 

The definition above does not involve any self-similarity. If we deal with a self-similar fractal, such as in Definition~\ref{def1}, then, according to \cite{Ki,Ki2}, a resistance form on $V_*=\cup_{n\geq0}V_n$ is 
 self-similar with resistance weights $r_j$ if  
\begin{equation}
	\mathcal{E}(u,u) = \sum_{j=1}^{N} \frac{1}{r_j} \mathcal{E}(\psi_j^{*}(u),\psi_j^{*}(u))
\end{equation}
for any $u \in \mathcal{F}$. For such resistance forms the maps $\psi_j$ are asymptotic contraction maps in the effective resistance metric with contraction ratio $r_j$. 

In our paper we always assume that   all resistance weights are equal, that is $$r_i=r_j=r \text{ \ for all \ }i,j=1,...,N.$$ 
In addition, we assume that 
the resistance form  
is \emph{regular}, that is $$r<1,$$ which corresponds to the case $$d_S=\frac{2\log{N}}{\log{N}-\log{r}} < 2$$  according to \cite{Ki,Ki2,KL1,Kumagai-1993}, where $d_S$ is the so called spectral dimension (see Remark~\ref{remark-ds} and \cite{St03jfa} for a discussion of this).




\begin{thm}[Kigami \cite{Ki3}] If $r<1$ then the Dirichlet and Neumann Laplacians are self-adjoint, have discrete spectra and, moreover, the Dirichlet Laplacian has a continuous Green's function $g(x,y)$.\end{thm}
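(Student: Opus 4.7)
The plan is to pass from the abstract resistance form $(\mathcal{E},\mathcal{F})$ on $V_*$ to a regular Dirichlet form on $L^2(K,\mu)$ for a self-similar probability measure $\mu$, and then invoke the standard correspondence between closed densely-defined non-negative quadratic forms on a Hilbert space and non-negative self-adjoint operators. The Neumann and Dirichlet Laplacians of Definition~\ref{def-Lp} should be identified with the operators produced from $(\mathcal{E},\mathcal{F})$ and from $(\mathcal{E},\mathcal{F}_0)$, respectively, where $\mathcal{F}_0=\{u\in\mathcal{F}:u|_{V_0}=0\}$. Once this identification is in place, self-adjointness is automatic.

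First I would extend $(\mathcal{E},\mathcal{F})$ from $V_*$ to $K$ and show that $\mathcal{F}\subset C(K)$ is uniformly dense. The key ingredient is that when $r<1$ the effective resistance metric $R_{(\mathcal{E},\mathcal{F})}$ induces the original topology on $K$, and every $u\in\mathcal{F}$ is $1/2$-Hölder continuous in this metric with constant $\mathcal{E}(u,u)^{1/2}$. Since $\mathcal{F}$ separates points and contains constants, Stone–Weierstrass gives uniform density in $C(K)$, so the form is a regular Dirichlet form on $L^2(K,\mu)$. Closedness of $\mathcal{F}$ and of $\mathcal{F}_0$ follow from the Hilbert-space property (2) in the definition of a resistance form together with the fact that point evaluation is continuous on $\mathcal{F}$ (via the finite effective resistance). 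This furnishes both Laplacians as self-adjoint non-negative operators.

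Next, I would establish that the inclusion $\mathcal{F}\hookrightarrow L^2(K,\mu)$ is compact, from which discreteness of the spectrum follows by the min–max principle. Elements of an $\mathcal{E}$-bounded set are uniformly bounded, since
\[
|u(x)-u(y)|^2\leq R_{(\mathcal{E},\mathcal{F})}(x,y)\,\mathcal{E}(u,u)
\]
and $\operatorname{diam}_R(K)<\infty$ by regularity, and they share a common modulus of continuity. Hence Arzelà–Ascoli gives precompactness in $C(K)$, and therefore in $L^2(K,\mu)$ as well. This is the step where the assumption $r<1$ is used essentially: without it $K$ need not be bounded in the resistance metric and the uniform Hölder control fails.

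Finally, for the Dirichlet Laplacian the Green's function is the integral kernel of $\Delta^{-1}$ on $\mathcal{F}_0$, and admits the variational representation
\[
g(x,y)=\sup\Bigl\{\tfrac{u(x)v(y)}{\mathcal{E}(u,v)}:u,v\in\mathcal{F}_0\Bigr\}
\]
referenced in \cite[Theorem~4.5]{Ki3}. Continuity off the diagonal follows from the uniform $1/2$-Hölder estimate on $\mathcal{F}_0$ applied to $g(\cdot,y)$ and $g(x,\cdot)$, both of which have finite energy. Continuity across the diagonal is obtained from the finiteness and continuity of the effective resistance, noting that $g(x,x)$ equals the effective resistance from $x$ to $V_0$, which is a continuous function of $x$ under $r<1$. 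The principal obstacle throughout is the compactness of the embedding into $L^2$, and with it the verification that the resistance metric topology on $K$ coincides with the Euclidean one; once this is granted, the rest of the proof is a standard application of the abstract theory.
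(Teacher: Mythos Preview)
The paper does not give its own proof of this theorem; it is stated as a result of Kigami \cite{Ki3}, and the paragraph immediately following it simply records which hypotheses of Kigami's Theorem~8.13 (non-atomic reference measure and integrability of the resistance metric over $K$) are met once $r<1$ forces finite resistance diameter. Your outline is a reasonable reconstruction of what lies behind Kigami's theorem and is largely on the right track, but two steps are not quite right as written.

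First, an $\mathcal{E}$-bounded set is \emph{not} uniformly bounded: constant functions have $\mathcal{E}=0$. What is needed is that a set bounded in the form norm $\mathcal{E}_1(u,u)=\mathcal{E}(u,u)+\|u\|_{L^2(\mu)}^2$ is precompact in $L^2(\mu)$. The resistance estimate gives equicontinuity; combining this with the $L^2$ bound and $\mu(K)=1$ yields a uniform pointwise bound, after which Arzel\`a--Ascoli applies. This is a minor fix, but the sentence as you wrote it is false.

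Second, the displayed variational formula for $g(x,y)$ is incorrect: the denominator $\mathcal{E}(u,v)$ can vanish or be negative, so the supremum is not well defined. The correct characterization is that $g(x,\cdot)\in\mathcal{F}_0$ is the reproducing kernel for point evaluation, i.e.\ $\mathcal{E}(g(x,\cdot),v)=v(x)$ for all $v\in\mathcal{F}_0$; hence $g(x,y)=\mathcal{E}(g(x,\cdot),g(y,\cdot))$ and $g(x,x)=\mathcal{E}(g(x,\cdot),g(x,\cdot))$ equals the effective resistance from $x$ to $V_0$. Joint continuity then follows, as you say, from the H\"older bound on $g(\cdot,y)$ together with continuity of $x\mapsto g(x,x)$. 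So your concluding paragraph is right in spirit; only the displayed formula needs to be replaced.
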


This result relies primarily on Theorem~8.13 in \cite{Ki3}, which says that $\Delta$ has compact resolvent  under the conditions that measure on $K$ is non-atomic and that the effective resistance metric is integrable over $K$. Thus $\Delta$ has a discrete spectrum. Finitely ramified fractals with regular harmonic structure have a natural non-atomic self-similar probability measure and have finite diameter in the effective resistance metric. Thus the spectrum of the Dirichlet or Neumann Laplacian $\Delta$ can be written as $0\le \lambda_0 \le\lambda_1 \le \lambda_2 \le \cdots \le \lambda_k \le \cdots$. Moreover, in the Dirichlet case we have $0=\lambda_0<\lambda_1$. 

In the case of a fully symmetric p.c.f. with equal resistance weights, one always has $r < 1$ and  Shima \cite{Sh} has shown that the spectral decimation method will produce the spectrum of $\Delta$, which is described in Subsection~\ref{sub-SD-K}. Here we extend this result for finitely ramified fractals with regular harmonic structure. We now prove Theorem \ref{thm:eigs}.

\begin{thm}
If the harmonic structure on a fully symmetric, finitely ramified self-similar fractal with equal resistance weights  is regular, that is $r<1$, then every eigenvalue of the Laplacian has a form 
\begin{equation}
	\lambda_k = \lim_{n \rightarrow \infty} \cc^{n}\lambda_k^{(n)} = \lim_{n \rightarrow \infty} \cc^{n_0+m}\phi_0^{m} \lambda_k^{(n_0)}\label{eq:eigenvalue}
\end{equation}  
where $k \in \mathbb{N}$, 
$\lambda_k^{(n)}$ is the $k$-th eigenvalue of the Laplacian $\Delta_n$, $n = n_0+m$. The second equality holds for any $n_0 = n_0(k)$ large enough, depending on $k$ and $\lambda_k$. 
Moreover, the multiplicities can be computed according to the formulas in \cite[Proposition~4.1]{eigenpapers}.  
\end{thm}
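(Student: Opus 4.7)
The plan is to combine Kigami's self-adjointness and discreteness of $\sigma(\Delta)$ (which applies because $r<1$) with the spectral decimation of Lemma~\ref{lemma:extension}, identifying each continuous eigenvalue as an admissible iterated-preimage limit under $R$ and then pinning down which branch of $R^{-1}$ must eventually be used. First, let $u$ be a continuous eigenfunction with $\Delta u = \lambda u$ and set $u_n = u|_{V_n}$. By Definition~\ref{def-Lp}, $\cc^n \Delta_n u(x) \to \lambda u(x)$ for every $x \in V_*$. I would argue inductively that $u_n$ is an eigenfunction of $\Delta_n$ with some eigenvalue $\lambda^{(n)}$ and that $R(\lambda^{(n+1)}) = \lambda^{(n)}$: once $\lambda^{(n+1)} \notin E(M,M_0)$, Lemma~\ref{lemma:extension} characterizes level-$(n+1)$ eigenfunctions as the eigenfunction-extensions of $u_n$, and only finitely many levels can land in the discrete exceptional set. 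Picking any $x \in V_*$ with $u(x) \neq 0$ gives
\begin{equation*}
\cc^{n}\lambda^{(n)}\longrightarrow \lambda,
\end{equation*}
which is the first equality.

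Second, because $\cc > 1$ (Remark~\ref{remark-ds}) and $\cc^n \lambda^{(n)}$ converges, the sequence $\lambda^{(n)}$ must tend to $0$. Among the partial inverses $\phi_0,\dots,\phi_L$ of $R$, only $\phi_0$ fixes $0$; the other branches send $0$ to nonzero preimages of $0$ under $R$, which form a discrete set bounded away from $0$. Hence there is a neighborhood $U$ of $0$ on which $\phi_0$ is the unique branch of $R^{-1}$ with $\phi_0(U)\subset U$ and $U$ contains no other $R$-preimage of $0$. Once $\lambda^{(n_0)} \in U$, the relation $R(\lambda^{(n_0+m)}) = \lambda^{(n_0+m-1)}$ together with $\lambda^{(n_0+m)} \to 0$ forces $\lambda^{(n_0+m)} = \phi_0^m(\lambda^{(n_0)})$, yielding
\begin{equation*}
\lambda = \lim_{m\to\infty}\cc^{n_0+m}\phi_0^{m}\bigl(\lambda^{(n_0)}\bigr).
\end{equation*}
The dependence of $n_0$ on $k$ is exactly the waiting time for $\lambda^{(n)}$ to enter $U$.

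Third, to match the $k$-th indexing on both sides I would use the eigenfunction-extension map $T_0(z)$ of Subsection~\ref{sub-SD}: iterating it together with the branch $\phi_0$ promotes each eigenfunction of any $\Delta_{n_0}$ into a genuine $\Delta$-eigenfunction, while the restriction-plus-limit argument above shows every continuous $\Delta$-eigenfunction arises in this way. A min-max / dimension count then reconciles the orderings, and the multiplicities are imported verbatim from \cite[Proposition~4.1]{eigenpapers}, whose combinatorial bookkeeping tracks exactly the branching of preimages along $\phi_0$ versus the other $\phi_j$.

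The main obstacle is the first step: justifying that $u_n$ is a bona fide $\Delta_n$-eigenfunction at every sufficiently large level, and not merely that the eigenvalue relation holds in the limit. This is where regularity $r<1$ enters essentially, via Kigami's self-similar resistance form framework, because one needs the restriction-extension procedure of Lemma~\ref{lemma:extension} to be globally reversible (so that the level-$n$ matrix eigenvalue equation holds exactly, not asymptotically) and one needs continuous $\Delta$-eigenfunctions to be smooth enough in the resistance metric that the harmonic-extension characterization of their values at fresh vertices is valid. Handling the finitely many levels where $\lambda^{(n)}$ may hit the exceptional set $E(M,M_0)$ is the remaining technical nuisance, to be treated by the Schur-complement analysis surrounding Proposition~\ref{prop2}.
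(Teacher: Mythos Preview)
Your overall architecture is close to the paper's, and Steps 2 and 3 (forcing the branch $\phi_0$ once $\lambda^{(n)}\to 0$, then invoking \cite{eigenpapers} for multiplicities) match the paper's Steps 2--4 essentially verbatim. The divergence is in Step 1, and there your argument has a genuine gap.

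The paper runs Step 1 in the \emph{extension} direction: start with a discrete eigenpair $(\lambda_k^{(n)},f_n)$, use Lemma~\ref{lemma:extension} to extend to $f_{n+1},f_{n+2},\dots$, obtain a continuous $f$ on $K$, and read off $\Delta f=\lambda_k f$ from the definition of $\Delta$. This direction is purely constructive and needs nothing beyond the lemma. The hard part---that \emph{every} eigenvalue of $\Delta$ arises this way (``is raw'')---is not proved from scratch; the paper cites Shima's Theorem~3.5 for the \pcf\ case and Kigami's resistance-form machinery (continuity of the Green's function when $r<1$) for the finitely ramified extension.

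You run Step 1 in the \emph{restriction} direction, and the inductive argument you sketch does not stand on its own. From $\cc^n\Delta_n u(x)\to\lambda u(x)$ pointwise you only get $\Delta_n u_n(x)\approx\cc^{-n}\lambda u(x)$ asymptotically, not $\Delta_n u_n=\lambda^{(n)}u_n$ exactly for any $\lambda^{(n)}$. Lemma~\ref{lemma:extension} is an ``if and only if'' between eigenpairs at consecutive levels, but to invoke it you must already know that $u_{n+1}$ is an exact $\Delta_{n+1}$-eigenfunction; the pointwise limit does not supply that, so your induction has no base case. You correctly flag this as the main obstacle and correctly point to Kigami's framework as the fix, but the fix is not the ``global reversibility of restriction--extension'' you describe: it is precisely Shima's argument (suitably generalized) that a continuous eigenfunction of $\Delta$ restricts to exact discrete eigenfunctions, which in turn rests on the continuity of the Green's function guaranteed by $r<1$. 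Once you quote that, your proof and the paper's coincide; without it, the restriction direction is incomplete.
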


\begin{proof} 
We prove this result in four steps to 
clearly delineate where the assumption on the Green's function and 
spectral dimension is used, also because the first two  
steps have been discussed before but not collected into a single result. 

\def\lL{\Lambda}

\emph{Step 1:}   We show that $\lambda_k = \lim_{n \rightarrow \infty} \cc^{n}\lambda_k^{(n)}$. Using Lemma \ref{lemma:extension} if $f_n$, a function on $V_n$, is an eigenfunction of $\Delta_n$ with eigenvalue $\lambda^{(n)}_k$ there is an extension $f_{n+1}$ on $V_{n+1}$ that is an eigenfunction of $\Delta_{n+1}$ with eigenvalue $\lambda_k^{(n+1)}$ and by induction there is a continuous function $f$ on $K$ with the property that $f|_{V_n}$ is an eigenfunction of $\Delta_n$ with eigenvalue $\lambda_k^{(n)}$. 
Then
\begin{eqnarray}
	\Delta f &=& \lim_{n \rightarrow \infty} \cc^{n}\Delta_n f|_{V_n}\\
	&=& \lim_{n \rightarrow \infty} \cc^{n} \lambda_k^{(n)} f|_{V_n}\\
	&=& \lambda_kf.
\end{eqnarray}
Where $\lambda_k = \lim_{n\rightarrow \infty} \cc^{n}\lambda_k^{(n)}.$ 
So any sequence $\lambda_k^{(n)}$ of eigenvalues of $\Delta_n$ will produce an eigenvalue of $\Delta$ along this scaled limit, if it exists. These are called ``raw eigenvalues'' by Shima \cite{Sh} and Kigami \cite{Ki,Ki3}.
It is in this step that we use the assumption on the Green's function. If the Green's function is continuous then Theorem 3.5 in \cite{Sh} states that all the eigenvalues of $\Delta$ are ``raw eigenvalues'' in the post critically finite case. Kigami \cite{Ki3} provides enough background for the extension of the claim to self-similar sets where the Laplacian defines a resistance form, such as fully symmetric finitely ramified fractals with a harmonic structure is regular. 

\emph{Step 2:} Let $\lL \ge 0$. Using the mechanics of spectral decimation given above, and in more detail in \cite{eigenpapers}, $\sigma(\Delta_n)$ is calculated using inverse images of $\sigma(\Delta_0)$ and  $E(M,M_0)$ under $R(z)$. There may be several branches of of $R^{-1}$ denoted $\phi_i$ but for $n= n_0 + m$ for $n_0$ large enough only $\cc^{n_0}\cc^{m}\phi_0^{m}(\sigma(\Delta_{n_0}))$ will intersect $[0,\lL)$. The value of $n_0$ 
can be computed directly from $R$, and depends on $\lL$. But elements of $\sigma(\Delta_{n_0})$ are given by spectral decimation to be $\cc^{n_0}\phi_w(\Delta_0)$ where $w$ is any word of length $n_0$. This gives eigenvalues for $\Delta$ of the form of (\ref{eq:eigenvalue}) which is equivalent to (\ref{eq:rawlimit}). So the eigenvalues less than $\lL$ are of the form claimed. 


\emph{Step 3:}  Let $\epsilon > 0$. Observe that $\cc^{-n}\sigma(\cc^{n}\Delta_n) = \sigma(\Delta_n)$. Thus $\sigma(\Delta) \cap \cc^{-n}[0,\lL) \subset [0,\epsilon)$. For $n$ large enough $\epsilon$ can be taken to be less than the least exceptional value. And for any $\lL$ the starting level $n_0$ can be chosen high enough. Hence all eigenvalues are of the claimed form since $\lL$ is arbitrary. 

\emph{Step 4:}  The multiplicities are computed  in \cite[Theorem 1.1]{eigenpapers} and, in particular, this theorem  shows that, for a given $k$, the multiplicity of $\lambda_k$ is the same as that of $\lambda_k^{(n)}$ for all $n$ large enough (depending on $k$ as above).
\end{proof}

It is often convenient to assume that $n_0$ is the smallest integer with the properties described above, but the claims are true if $n_0$ is replaced with any integer between the smallest possible $n_0$ and $n$ in the proof above. 

R. Grigorchuk, V. Nekrashevych, and Z. Sunic have used spectral decimation to calculate the spectrum of a Laplacian on Julia sets \cite{GNS}. In this setting the mapping $R(z)$ is determined from the dynamics of the self-similar group as given by the iterated monodromy gorup. The theory presented here does not cover this case. The work by C. Sabot \cite{Sabot2003} uses a similar renormalization method with rational maps of several variables to produce spectral information about Laplacians that are similar to those considered here.

\subsection{The Julia set and the graph Laplacians}\label{sub-j}
According to the classical theory \cite{Br,CG,Milnor}, the Julia set of the spectral decimation function $R$, denoted $\mathcal{J}_R$, is given by 
\begin{equation*}
	\mathcal{J}_R=closure\bigcup_{n\geq 0}R^{-n}(0),
\end{equation*}
where $R^{-n}(0)$ are pre-images of $0$ of order $n$ (because $0$ is a repulsive fixed point). 
Furthermore, according to \cite{MT},   
we have that  
\begin{equation*}
	\mathcal{J}_R\subseteq \sigma (\Delta _{\infty })\subseteq \mathcal{J}_R\cup \mathcal{D}_{\infty } \cap \mathbb{R}^{+},
\end{equation*}
where $\Delta_{\infty} = \lim_{n \rightarrow \infty} \Delta_n$ is the discrete probabilistic Laplacian on an  infinite self-similar graph
and 
$\mathcal{D}_{\infty }\setminus \mathcal{J}_R$ contains only isolated points. Here 
 $$\mathcal{D}_{n}=\bigcup_{m=0}^{n}R^{-m}(E(M,M_0) \cup \sigma (\Delta _{0})),$$ and $$\mathcal{D}_{\infty }=\bigcup_{n=0}^{\infty }\mathcal{D}_{n},$$ where the unions are increasing in $n$. Moreover
$$\sigma (\Delta _{n})\subseteq \mathcal{D}_{n} \text{\ \ and\ \ }  \sigma (\Delta _{n})\subseteq \sigma (\Delta _{\infty}) .$$



\section{Gaps in the spectrum}In this section we prove that the existence of gaps can be characterized by
the Julia set.

It is known that spectra of Laplacian operators on many fractals have gaps. Investigation of the existence of gaps is important to analysis on fractals because of its many interesting applications, as mentioned in the introduction. Some criteria and examples are given in \cite{ZhGaps} and \cite{ZhVS}, although the verification can be tedious. In the next section we will derive a simple and easy to apply criterion based on the total disconnectedness of the Julia set of the spectral decimation function, generalizing the results of \cite{ZhGaps}.

\begin{definition}\label{def:gaps}
For a given infinite sequence $0\le \alpha_1\leq \alpha_2\leq \cdots\leq
\alpha_k\leq \cdots$, we say that there exist gaps in the sequence if $%
\displaystyle{\limsup_{k\geq1} \frac{\alpha_{k+1}}{\alpha_k}}>1$.
\end{definition}

By Theorem \ref{thm:eigs} the study of ratios of eigenvalues of the Laplacian on fully symmetric finitely ramified fractals can be reduced to the corresponding limit of ratios of eigenvalues of the discrete Laplacians on finite graphs $G_{n}$ that approximate the fractal. This is because any ratio of eigenvalues in $\sigma ( \Delta )$ will have the form 
\begin{eqnarray}
	\frac{\cc^{i}\lim_{n\rightarrow \infty }\cc^{n+\left\vert w\right\vert}\phi _{0}^{(n)}\phi _{w}(\beta _{j})}{\cc^{l}\lim_{n\rightarrow \infty}\cc^{n+\left\vert v\right\vert }\phi _{0}^{(n)}\phi _{v}(\beta _{k})} &=&\lim_{n\rightarrow \infty }\frac{\phi _{0}^{(n-i-\left\vert w\right\vert)}\phi _{w}(\beta _{j})}{\phi _{0}^{(n-l-\left\vert v\right\vert )}\phi_{v}(\beta _{k})} \\
	&=&\lim_{n\rightarrow \infty }\frac{\phi _{0}^{(n)}\phi _{w}(\beta _{j})}{\phi _{0}^{(n+r)}\phi _{v}(\beta _{k})}\label{eq:eigenform}
\end{eqnarray}
for some integer $r$ and $\beta _{j},\ \beta _{k}\in \sigma(\Delta_0)$. Both the numerator and denominator in the last ratio are eigenvalues of discrete Laplacians on graphs approximating the fractal. Hence it is sufficient to consider all ratios of eigenvalues in $\bigcup_{n}\sigma ( \Delta _n)$ (without powers of $\cc$) in order to show the existence of gaps in the spectrum of Laplacians on fractals.


According to Subsection~\ref{sub-j} and Corollary~\ref{cor-S}, the following lemma applies for any spectral decimation function of a finitely ramified fully symmetric fractal, always a rational function.

\begin{lemma}\label{lem: JuliaIntersection} Let $R$ be a spectral decimation function, rational and of degree at least 2 with an attractive fixed point at infinity and such that 
$R(0)=0$, $R'(0)>1$, and the Julia set $\mathcal{J}_R$ 
of $R$ is real and nonnegative. If   $I=[0,a]$ is  the convex hull of $\mathcal{J}_R$ 
then the following is true:  
\begin{enumerate}
\item  $R^{-n-1} (I)\subset R^{-n} (I)$ for all $n\ge 0$; 
	\item $\mathcal{J}_R=\bigcap_{n=0}^\infty R^{-n} (I)$; 
	\item either \,$\mathcal{J}_R=I$\, or \,$\mathcal{J}_R$\, is
totally disconnected; 
	\item $\mathcal{J}_R=I$\,  if and only if \,$R^{-1}(I)$\, is connected; 
\item either $R(a)=0$ or $a$ is the largest fixed point of $R$.
\end{enumerate}
\end{lemma}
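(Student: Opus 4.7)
The plan is to prove the five claims in the order (5), (1), (2), (4), (3), since each builds on the previous.

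For (5), compactness of $\mathcal{J}_R$ gives $a \in \mathcal{J}_R$, and complete invariance yields $R(a) \in \mathcal{J}_R \subseteq [0,a]$. If $R(a) \in (0,a)$, then because $R(x) \to \infty$ as $x \to \infty$ (the fixed point at infinity being attractive), the intermediate value theorem applied on $[a,\infty)$ provides some $c>a$ with $R(c)=a$; since $R^{-1}(\mathcal{J}_R) = \mathcal{J}_R$, this forces $c \in \mathcal{J}_R$, contradicting $a = \max \mathcal{J}_R$. Hence $R(a) \in \{0,a\}$.

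For (1), iterating reduces us to proving $R^{-1}(I) \subseteq I$. Both $0$ (a repulsive fixed point because $R'(0)>1$) and $a$ (by (5)) lie in $\mathcal{J}_R$, so $R^{-1}(\{0,a\}) \subseteq \mathcal{J}_R \subseteq I$, meaning $R$ takes neither value $0$ nor $a$ on $\mathbb{R}\setminus I$. The signs of $R$ and $R-a$ are therefore constant on each of $(-\infty,0)$ and $(a,\infty)$; combining this with the boundary values $R(0)=0$, $R(a) \in \{0,a\}$ and the blow-up $|R|\to\infty$ at infinity guaranteed by Corollary~\ref{cor-S}, a short case analysis excludes $R(x) \in I$ for every $x\in\mathbb{R}\setminus I$. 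Non-real preimages are ruled out by noting that any real critical value of $R$ falling inside $I$ would, by total invariance, have to lie in $\mathcal{J}_R$, and the local degree-two behavior of $R$ near such a critical point, combined with the real analysis just carried out, would force a real preimage outside of $I$, a contradiction.

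Given (1), statement (2) follows because the nested decreasing sets $R^{-n}(I)$ always contain $\mathcal{J}_R$ by invariance, while the intersection consists precisely of the points whose forward orbit remains in $I$; the repulsive dynamics at $0$ together with the classical characterization $\mathcal{J}_R = \overline{\bigcup_n R^{-n}(0)}$ recalled in Subsection~\ref{sub-j} identifies this intersection with $\mathcal{J}_R$. For (4), if $\mathcal{J}_R = I$ then $R^{-1}(I) = R^{-1}(\mathcal{J}_R) = \mathcal{J}_R = I$, which is connected; conversely, if $R^{-1}(I)$ is connected, then as a closed connected subset of $I$ that contains both $0$ and $a$ (both lying in $\mathcal{J}_R \subseteq R^{-1}(I)$), it must equal $I$, so $R^{-n}(I)=I$ for all $n$ and (2) gives $\mathcal{J}_R = I$. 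Then (3) is the contrapositive of (4) combined with a Cantor-set construction: if $\mathcal{J}_R \neq I$, then by (4) $R^{-1}(I)$ is a disjoint union of $k\geq 2$ closed subintervals of $I$, and iteration produces a refining family of closed intervals whose lengths shrink to zero by the contraction of the inverse branches $\phi_i$ of $R$ on a neighborhood of $\mathcal{J}_R$, so the intersection $\mathcal{J}_R$ is totally disconnected.

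The principal obstacle I anticipate is rigorously excluding the non-real preimages in (1) and justifying the interval-length contraction in (3); both rely on the expansion of $R$ on $\mathcal{J}_R$ (equivalently, the uniform contraction of the $\phi_i$), which in the hyperbolic regime is standard from complex dynamics, but in the full generality of spectral decimation functions may require a separate argument exploiting the explicit rational form of $R$ coming from Proposition~\ref{prop2} and the hypothesis that $\mathcal{J}_R$ is real.
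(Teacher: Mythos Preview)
Your approach differs substantially from the paper's and is more hands-on. The paper's proof is a brief sketch: it first observes that because $\mathcal{J}_R$ is a compact subset of $\mathbb{R}$, the complement $\hat{\mathbb{C}}\setminus\mathcal{J}_R$ is a single connected Fatou component, hence equals the basin of the attractive point $\infty$, so every critical orbit escapes and $R$ is hyperbolic on $\mathcal{J}_R$. It then introduces $b=\min\{t>0:R^{-1}([0,t])\subseteq[0,t]\}$, notes that $R^{-1}([0,b])$ is a finite union of intervals, argues that either $R(b)=0$ or $b$ is a fixed point, invokes Sullivan's classification to exclude an indifferent fixed point in the Fatou set, and concludes $a=b$; the five items then drop out. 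Your route (5)$\to$(1)$\to$(2)$\to$(4)$\to$(3), with the clean intermediate-value argument for (5), is genuinely different and more transparent on the real-variable side.

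There is, however, a concrete error in your handling of non-real preimages in (1). The claim that ``any real critical value of $R$ falling inside $I$ would, by total invariance, have to lie in $\mathcal{J}_R$'' is false: total invariance constrains preimages of points of $\mathcal{J}_R$, not critical values, and a critical value can certainly lie in $I\setminus\mathcal{J}_R$ (which is part of the Fatou set). The remainder of that sentence does not produce the contradiction you want. The simplest repair is to read $R^{-1}$ as the real preimage throughout---this suffices for (2)--(4) since $\mathcal{J}_R\subset\mathbb{R}$---after which your sign analysis on $(-\infty,0)\cup(a,\infty)$ already gives (1) without any discussion of complex points.

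On your closing paragraph: the obstacle you anticipate is not actually an obstacle. Hyperbolicity is \emph{automatic} under the stated hypotheses---a compact real Julia set forces a single Fatou component, which must be the basin of $\infty$, so every critical orbit escapes. This is precisely the paper's opening observation, and it immediately supplies the uniform expansion on $\mathcal{J}_R$ (equivalently, the uniform contraction of the inverse branches $\phi_i$) that you need for the interval-length shrinking in (3). No separate argument exploiting the special form of $R$ from Proposition~\ref{prop2} is required; the lemma's own hypotheses already place you in the hyperbolic regime.
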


\begin{proof} By the classical complex dynamics theory (see \cite{Br,CG,Milnor}), the assumptions of this lemma  imply that there is a single Fatou component attracted to infinity, and the action of $R$ on its Julia set is hyperbolic. Let $b$ be the smallest positive number such that $R^{-1}([0,b])\subseteq[0,b]$, which exists because infinity is an attractive fixed point. We have that $R^{-1}([0,b])$ is a finite collection of intervals, which easily imply that either $R(b)=0$ or $b$ is a repulsive or indifferent fixed point of $R$. However the Sullivan classification of Fatou components excludes the possibility of an indifferent fixed point in the Fatou component, and  hence $a=b$  and all the claims of the lemma follow.
\end{proof}

Our main result is Theorem \ref{thm2}. 

\begin{thm}
Under the conditions of Theorem~\ref{thm:eigs}, there exist gaps in $\sigma(\Delta)$ if and only if  $\mathcal{J}_R$ is totally disconnected.
\end{thm}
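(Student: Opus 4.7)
The plan is to exploit the dichotomy in Lemma \ref{lem: JuliaIntersection}: either $\mathcal{J}_R = I = [0,a]$ or $\mathcal{J}_R$ is totally disconnected. Each direction is handled using the Koenigs linearization of $R$ at the repelling fixed point $0$. Let $G(x) := \lim_{m \to \infty} \cc^m \phi_0^m(x)$; by Remark~\ref{remark-ds} and classical complex dynamics, this limit exists on a neighbourhood of the relevant pre-images and defines a strictly increasing function with $G(0) = 0$ and $G'(0) = 1$. Theorem~\ref{thm:eigs} then rewrites each eigenvalue of $\Delta$ as $\lambda_k = \cc^n G(\lambda_k^{(n)})$ for any $n$ large enough relative to $k$.

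For the easier direction, suppose $\mathcal{J}_R$ is totally disconnected. Then $\mathcal{J}_R \neq I$, so fix an open interval $(c,d) \subset I \setminus \mathcal{J}_R$ with $c, d \in \mathcal{J}_R$, $0 < c < d$, and $(c,d) \cap (\mathcal{D}_\infty \setminus \mathcal{J}_R) = \varnothing$, which is possible because $\mathcal{D}_\infty \setminus \mathcal{J}_R$ is countable and consists of isolated points (Subsection~\ref{sub-j}). Since $\mathcal{J}_R \subseteq \overline{\bigcup_n \sigma(\Delta_n)}$, for all $n$ sufficiently large there are consecutive eigenvalues $\mu_n := \lambda_{k_n}^{(n)} \in \sigma(\Delta_n) \cap [0,c]$ and $\nu_n := \lambda_{k_n+1}^{(n)} \in \sigma(\Delta_n) \cap [d,a]$ with $\mu_n \to c$ and $\nu_n \to d$. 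Applying Theorem~\ref{thm:eigs},
\[
\frac{\lambda_{k_n+1}}{\lambda_{k_n}} = \frac{\cc^n G(\nu_n)}{\cc^n G(\mu_n)} = \frac{G(\nu_n)}{G(\mu_n)} \longrightarrow \frac{G(d)}{G(c)} > 1.
\]
Since $\lambda_{k_n} \approx \cc^n G(c) \to \infty$, monotonicity of $k \mapsto \lambda_k$ forces $k_n \to \infty$, producing infinitely many gap-ratios bounded below by $G(d)/G(c) > 1$, so $\limsup_k \lambda_{k+1}/\lambda_k > 1$.

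For the converse, suppose $\mathcal{J}_R = I$. We show $\limsup_k \lambda_{k+1}/\lambda_k = 1$. By Lemma~\ref{lem: JuliaIntersection}(4), $R^{-1}(I) = I$, so iterated pre-images $\{\phi_w(\beta) : |w| = n, \beta \in \sigma(\Delta_0) \cup E(M,M_0)\}$ become $\epsilon_n$-dense in $I$ with $\epsilon_n \to 0$. Fix $\eta > 1$ and consider an eigenvalue $\lambda_k = \cc^n G(\mu)$ with $\mu \in \sigma(\Delta_n)$ and $n$ large. The density of $\mathcal{D}_n$ in $I$, together with the fact that $\sigma(\Delta_n)$ and $\mathcal{D}_n$ differ by only finitely many isolated points (Subsection~\ref{sub-j}), yields $\mu' \in \sigma(\Delta_n)$ with $\mu < \mu' \le G^{-1}(\eta G(\mu))$. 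Then $\cc^n G(\mu') \in (\lambda_k, \eta \lambda_k]$ is an eigenvalue of $\Delta$, forcing $\lambda_{k+1}/\lambda_k \le \eta$; since $\eta > 1$ was arbitrary, there are no gaps.

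The main obstacle is the converse direction: density of pre-images in $I$ must be upgraded to uniform density of \emph{genuine} eigenvalues of $\Delta_n$ on every multiplicative scale $[\mu, \eta\mu]$. This requires the multiplicity computation of \cite[Proposition~4.1]{eigenpapers} to rule out configurations where eigenvalues vacate a subinterval of $I$, and control of the bounded distortion of the inverse branches $\phi_w$ near the repelling fixed point so that the density estimate remains uniform as $n \to \infty$. Tracking how eigenvalues arising at different graph levels interleave in the full spectrum $\sigma(\Delta)$ is the central technical subtlety.
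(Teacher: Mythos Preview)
Your forward direction shares the paper's strategy of exploiting a gap in $\mathcal{J}_R$ together with the dichotomy of Lemma~\ref{lem: JuliaIntersection}, but the execution via the global Koenigs map $G$ has a genuine gap. You invoke $\lambda_{k_n}=\cc^{\,n}G(\mu_n)$ with $\mu_n=\lambda_{k_n}^{(n)}\to c$ bounded away from $0$, yet the identity $\lambda_k=\cc^{\,n}G(\lambda_k^{(n)})$ from Theorem~\ref{thm:eigs} is only asserted for $n\ge n_0(k)$; since $k_n$ grows with $n$ you never verify $n\ge n_0(k_n)$. Concretely, the $k_n$-th eigenvalue of $\Delta_{n+1}$ need not be $\phi_0(\mu_n)$: at level $n+1$ new exceptional eigenvalues, or $\phi_0$-images of eigenvalues of $\Delta_n$ lying above $c$, can insert themselves below $\phi_0(\mu_n)$ and shift the index. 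In addition, $G$ is only guaranteed to be defined on the immediate basin of $0$ for $\phi_0$, which need not contain an arbitrary gap $(c,d)$ of $\mathcal{J}_R$. The paper avoids all of this by first locating the gap inside $[0,\epsilon]$ with $\epsilon$ below every exceptional value and inside the domain of $\phi_0$, then iterating $\phi_0$ to push the gap toward $0$; there no index-shifting can occur, $\phi_0$ is asymptotically linear so the endpoint ratio stabilizes, and Theorem~\ref{thm:eigs} transfers the resulting uniform gaps in $\cc^{\,n}\sigma(\Delta_n)$ to $\sigma(\Delta)$.

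For the converse you correctly identify the obstacle but leave it unresolved. The paper sidesteps your density-and-distortion program entirely: if $\mathcal{J}_R=I$, then by \cite[Theorem~5.8]{MT} one has $\mathcal{J}_R\subseteq\sigma(\Delta_\infty)$, so $\sigma(\Delta_\infty)$ contains the whole interval $I$. Gaps in $\sigma(\Delta)$ would, through the ratio identity~\eqref{eq:eigenform}, force $\sigma(\Delta_\infty)$ to miss a subinterval of $I$, a contradiction. The single missing ingredient in your sketch is precisely this inclusion from \cite{MT}; with it the ``uniform multiplicative density'' you are trying to manufacture by hand is immediate, and no appeal to \cite[Proposition~4.1]{eigenpapers} or bounded-distortion estimates is needed.
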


\begin{proof}
First, suppose  $\mathcal{J}_R$ is totally disconnected. Then, following Lemma~\ref{lem: JuliaIntersection}, the set $I_n= R^{-n} (I)$ is a finite collection of closed intervals which cover the Julia set and decrease as $n$ increases (because of the hyperbolicity). Hence there is an $n_0$ such that one of the intervals that constitute $I_{n_0}$ has the form $[0,\epsilon]$, where $\epsilon$ is smaller than any point in the exceptional set. Moreover, if $n_0$ is large enough then 
$[0,\epsilon]$ is contained in the domain of the branch $\phi _{0}$ of the partial inverses of $R$. It is easy to see that $[0,\epsilon]\cap I_{n+1}$ is a union of at least two closed nonintersecting intervals. 
We have that 
\begin{equation}\label{e-int} 
	\bigcup_{m=0}^n R^{-m} \big(\sigma(\Delta_0)\cup E(M,M_0)\big) \,\backslash\,  \mathcal{J}_R
\end{equation} 
consists of isolated points that accumulate to $\mathcal{J}_R$, unless the set in \eqref{e-int} is empty, because 
$\mathbb C\,\backslash\,  \mathcal{J}_R$ is attracted to infinity by the iterations of $R$. 

Therefore there are positive numbers $\alpha_0,\beta_0 \in (0,\epsilon]$ such that $\alpha_0<\beta_0$ and the interval $(\alpha_0,\beta_0)$ does not intersect 
 $\sigma (\Delta _{\infty })$. But then there is another interval $(\phi_0(\alpha_0),\phi_0(\beta_0)) = (\alpha_1,\beta_1)$ 
 that does not intersect  $\sigma (\Delta _{\infty })$ and so forth. 

The ratio $\frac{\alpha_k}{\beta_k}$ is for large enough $k$ bounded below uniformly in $k$,   since $\alpha_k, \beta_k \rightarrow 0$ as $k\to0$ and $\phi_0(z)$ is asymptotically linear near zero. 
(The starting interval could have been chosen close enough to zero so that the final fraction bounding the ratios is at least $\frac{1}{2}$ since $\mathcal{J}_R$ is totally disconnected and compact.) 
Then we multiply the spectrum by $\cc^n$ to obtain gaps in the spectra of $\cc^n\Delta_n$ which are uniform in $n$. Therefore the spectrum of $\Delta$ has gaps because of Theorem~\ref{thm:eigs}. 

In the opposite direction, suppose there are gaps in the spectrum but $\mathcal{J}_R=I$ is an interval (Lemma~\ref{lem: JuliaIntersection} implies that if $\mathcal{J}_R\neq I$ then it is totally disconnected). 
By the reasoning very similar to that   given above, the gaps in the spectrum imply that 
$\sigma (\Delta _{\infty })$ is not dense in $I$, which contradicts to the main result, Theorem 5.8, in \cite{MT}.  
\end{proof}

\begin{conjecture}\label{c1}
We conjecture that,  under the conditions of Theorems~\ref{thm:eigs}  and~\ref{thm2},  there are no gaps (which means $\mathcal{J}_R=I$  is an interval) if and only if $R(z)$ is a Chebyshev polynomial, up to  trivial constants. Furthermore, we conjecture that this happens if and only if the fractal $K$ is one of the so-called  Barlow-Evans fractals based on a unit interval (see \cite{BE,Steinhurst-BE,ST-BE} for more detail). This means that the self-similar structure of $K$ is based on an interval, such as in the case of  Diamond fractals of \cite[Section~7]{eigenpapers}, \cite{ADT}, and \cite{HK-2010}. Relevant information can be found in \cite[Section~9]{Kr2002}, \cite[Section~8]{KrTe03} and \cite{GW97,Grabner,DGV2,DGV}. Note that our results already show that  the complex dynamics of $R$ is conjugate to that of a Chebyshev polynomial if there are no gaps. 
\end{conjecture}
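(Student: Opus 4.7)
The conjecture contains two separate biconditional statements: first that the absence of gaps is equivalent to $R$ being a Chebyshev polynomial up to trivial affine rescaling, and second that the absence of gaps is equivalent to $K$ being one of the Barlow-Evans interval-based fractals. Theorem~\ref{thm2} together with Lemma~\ref{lem: JuliaIntersection} already establishes that the absence of gaps is equivalent to $\mathcal{J}_R = I$, so the first equivalence reduces to classifying rational maps whose Julia set is a real interval, while the second is a rigidity statement about iterated function systems. The plan is to treat the two equivalences separately, using classical complex dynamics for the first and an explicit geometric reconstruction for the second.

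For the first equivalence, the strategy is to invoke the classical theorem of Fatou and Julia: any rational map of degree at least two whose Julia set lies in a real line segment is affinely conjugate to a Chebyshev polynomial. I would first argue that $R$ must in fact be a polynomial. Since $\mathcal{J}_R \subset \mathbb{R}^+$ is compact and the Fatou set $\hat{\mathbb{C}} \setminus \mathcal{J}_R$ is connected and contains the attracting fixed point at infinity, it must coincide with the immediate basin of infinity, which forces infinity to be totally invariant and hence $R$ to be a polynomial. The classical classification then applies, and the normalizations $R(0) = 0$, $R'(0) > 1$, and $\mathcal{J}_R = [0,a]$, together with item (5) of Lemma~\ref{lem: JuliaIntersection}, pin down the affine change of coordinates $z \mapsto 2z/a - 1$ which conjugates $R$ to a standard Chebyshev polynomial $T_d$ on $[-1,1]$. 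The reverse implication is immediate since the Julia set of a Chebyshev polynomial is the interval $[-1,1]$.

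For the second equivalence, the plan is to combine explicit spectral decimation computations for the known Barlow-Evans interval-based fractals with a rigidity argument in the other direction. First I would verify by direct computation that the spectral decimation function for each fractal constructed in \cite{BE,Steinhurst-BE,ST-BE,HK-2010} and for the Diamond fractals of \cite[Section~7]{eigenpapers} and \cite{ADT} is indeed a Chebyshev polynomial, which together with the first equivalence yields one direction. For the converse, one would start from a fractal $K$ satisfying the assumptions of Theorem~\ref{thm:eigs} whose spectral decimation function is Chebyshev of degree $d$, and reconstruct a one-dimensional cell structure: the linear ordering of eigenvalues of $\Delta_n$ coming from the cosine parameterization of the Chebyshev Julia set, transported to eigenfunctions on $K$ via the extension map of Lemma~\ref{lemma:extension}, should induce a compatible ordering on the subcells $\psi_j(K)$, which combined with the doubly transitive action of $\F G$ in Definition~\ref{def1} should identify $K$ as a Barlow-Evans construction over $[0,1]$.

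The main obstacle I anticipate is the converse direction in the second equivalence: deducing an interval-like cell structure on $K$ purely from the form of its spectral decimation function. Even under the full symmetry and finite ramification hypotheses, the passage from spectral data back to the iterated function system is delicate, since in principle distinct fully symmetric finitely ramified fractals could share a spectral decimation function, and the rigidity needed here will likely require a careful analysis of the multiplicity patterns from \cite[Proposition~4.1]{eigenpapers} together with the combinatorial structure of the block $B$ in the decomposition of $M_1$. A natural first step is to show that $|V_0| = 2$ is forced by the Chebyshev structure (since the Chebyshev recursion has at most two indifferent endpoints and the double transitivity collapses the boundary), after which standard arguments for fully symmetric fractals with two boundary points and regular harmonic structure should yield that $K$ is of Barlow-Evans type.
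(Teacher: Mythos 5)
This statement is labelled as a \emph{Conjecture} in the paper, and the paper offers no proof of it; the only part the authors claim to have established is the closing sentence, namely that $\mathcal{J}_R=I$ forces the dynamics of $R$ to be conjugate to that of a Chebyshev polynomial (this follows from Lemma~\ref{lem: JuliaIntersection}, Theorem~\ref{thm2}, and the classical Fatou--Julia classification of rational maps whose Julia set is an arc). So there is no argument in the paper to compare yours against, and your text should be judged as a research plan rather than a proof. Within the first equivalence, your overall route (classify rational maps with Julia set a segment, then pin down the affine normalization) is the natural one, but the specific step ``the Fatou set coincides with the immediate basin of infinity, which forces infinity to be totally invariant and hence $R$ to be a polynomial'' is not valid as stated: total invariance means $R^{-1}(\infty)=\{\infty\}$, and a rational map can have finite poles lying in the basin of infinity, so equality of the Fatou set with that basin does not exclude poles. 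The correct route is to first get a M\"obius conjugacy $M$ to $\pm T_d$ from the classical theorem, then observe that $M^{-1}(\infty)$ must be a fixed point of $\pm T_d$ lying outside $[-1,1]=\mathcal{J}_{T_d}$ (since $\infty$ is an attracting fixed point of $R$ not in $\mathcal{J}_R$), and all finite fixed points of $\pm T_d$ lie in $[-1,1]$; hence $M$ fixes $\infty$, is affine, and $R$ is a genuine Chebyshev polynomial up to trivial constants. You should also track the sign ambiguity $\pm T_d$ and the parity of $d$ against the normalizations $R(0)=0$, $R'(0)>1$ and item (5) of Lemma~\ref{lem: JuliaIntersection}.

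The genuine gap is the second equivalence, which is the actual content of the conjecture and which your proposal does not prove. In the forward direction, the Barlow--Evans fractals based on an interval form an infinite family given by a general construction, not a finite list, so ``verify by direct computation for each'' is not a proof; moreover it is not established that every member of this family falls under the hypotheses of Theorem~\ref{thm:eigs} (fully symmetric, finitely ramified, equal resistance weights, regular harmonic structure) with a scalar rational spectral decimation function in the sense of Proposition~\ref{prop2}. In the converse direction you explicitly acknowledge that reconstructing an interval-like cell structure on $K$ from the Chebyshev form of $R$ is an obstacle, and the proposed remedy (that $|V_0|=2$ is ``forced by the Chebyshev structure'' and that the cosine parameterization induces a linear ordering of the cells) is a heuristic, not an argument: distinct self-similar structures can in principle share a spectral decimation function, and nothing in your sketch rules this out. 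As it stands, the proposal is a plausible program for the first biconditional and an outline of difficulties for the second; it does not settle the conjecture.
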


\begin{conjecture}
We conjecture that,  under the conditions of Theorems~\ref{thm:eigs}  and~\ref{thm2}, there are nontrivial complex dimensions if and only if $K$ is not homeomorphic to an  interval (see references in Conjecture~\ref{c1} and  \cite{LvF2006,ST-merozeta} for more detail). 
\end{conjecture}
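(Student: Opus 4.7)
The plan is to work directly with the spectral zeta function $\zeta_\Delta(s)=\sum_{\lambda_k>0} \lambda_k^{-s/2}$ and to characterize its meromorphic continuation in terms of the complex dynamics of $R$, following the framework of \cite{LvF2006,ST-merozeta}. In that framework, a \emph{nontrivial complex dimension} is a pole of $\zeta_\Delta$ with nonzero imaginary part; such poles produce log-periodic oscillations in the eigenvalue counting function $N(\lambda)$ beyond the leading Weyl term. The task is to prove that these oscillations are present exactly when $K$ is not homeomorphic to an interval.

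The first step is to use Theorem~\ref{thm:eigs} together with Lemma~\ref{lemma:extension} and the spectral decimation apparatus of Subsection~\ref{sub-SD-K} to decompose $\zeta_\Delta(s)$, up to finitely many terms from the exceptional set, as
\begin{equation*}
  \zeta_\Delta(s) \;=\; \sum_{\lambda\in\sigma(\Delta_{n_0})} m(\lambda)\,\cc^{-n_0 s/2}\sum_{m=0}^{\infty}\cc^{-ms/2}\bigl(\cc^{m}\phi_0^{(m)}(\lambda)\bigr)^{-s/2},
\end{equation*}
for $n_0$ chosen as in Theorem~\ref{thm:eigs} and $\lambda$ close enough to the repulsive fixed point $0$ of $R$. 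Using the linearization $\phi_0(z)=z/\cc+O(z^2)$ at $0$, the inner sum splits as a geometric series in $\cc^{-s/2}$ plus an analytic correction, producing candidate poles along the arithmetic progression $s_k = 2\pi i k/\log \cc$ translated by the spectral dimension $d_S$, exactly as in the self-similar string theory of \cite{LvF2006}.

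The heart of the proof is to decide whether these candidate poles survive or cancel, and this is where the two cases diverge. If $K$ is a Barlow--Evans interval-based fractal, Conjecture~\ref{c1} says $R$ is a Chebyshev polynomial and $\mathcal{J}_R=I$. Conjugating $R$ to $z\mapsto z^L$ via the Chebyshev change of variable, the eigenvalue counting function $N(\lambda)$ reduces to that of the classical Laplacian on $[0,1]$, and one verifies by explicit computation that the multiplicities $m(\lambda)$ at each level of the decimation tree conspire to cancel every nonreal pole; equivalently, $\zeta_\Delta$ inherits the Riemann zeta structure of the interval, whose only complex dimensions are real. In the opposite direction, when $K$ is not homeomorphic to an interval, Theorem~\ref{thm2} gives gaps in $\sigma(\Delta)$ and (conjecturally, by Conjecture~\ref{c1}) $\mathcal{J}_R$ is a totally disconnected self-similar set of Hausdorff dimension $d_S<1$. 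The uniform gap ratios $\alpha_k/\beta_k$ constructed in the proof of Theorem~\ref{thm2} give a quantitative lower bound on the oscillatory part of $N(\lambda)$, and a Tauberian argument applied to this oscillation shows that at least one of the candidate poles on the line $\Re(s)=d_S$ must have nonzero residue, producing a nontrivial complex dimension.

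The principal obstacle is the possibility of accidental cancellation of complex dimensions in the totally disconnected case: two distinct orbits $\phi_w(\beta_j)$ and $\phi_v(\beta_k)$ from Theorem~\ref{thm:eigs} can in principle contribute residues that cancel along the vertical line $\Re(s)=d_S$, and ruling this out requires understanding the arithmetic of preimage words $w$ under $R$ combined with the multiplicity formulas of \cite[Proposition~4.1]{eigenpapers}. A promising route is to show that the counting function has a strictly positive oscillatory amplitude, bounded below by the product of the ratio $\alpha_0/\beta_0$ and the Hausdorff measure of $\mathcal{J}_R$, which prevents simultaneous cancellation of all nonreal poles. A secondary difficulty, inherited from Conjecture~\ref{c1}, is the converse implication that only Chebyshev $R$ yield $\mathcal{J}_R=I$; until that conjecture is settled, the proof of the present conjecture is conditional on it, and the main theorem one can unconditionally hope for is the implication that nontrivial complex dimensions exist whenever $\mathcal{J}_R$ is totally disconnected.
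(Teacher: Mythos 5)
The statement you are addressing is stated in the paper as a conjecture; the authors give no proof and only point to \cite{LvF2006,ST-merozeta} for the framework in which ``complex dimensions'' (nonreal poles of the spectral zeta function) are defined. So there is no paper proof to compare against, and the real question is whether your text closes the conjecture. It does not: it is a research programme with gaps that you partly acknowledge but that are fatal as a proof. First, the proposed decomposition of $\zeta_\Delta(s)$ is not legitimate as written. In Theorem~\ref{thm:eigs} the level $n_0=n_0(k)$ depends on the eigenvalue, so you cannot freeze a single $n_0$ and recover the whole spectrum by iterating $\phi_0$ on the finite set $\sigma(\Delta_{n_0})$; the spectrum is indexed by the entire tree of words $w$ together with exceptional values entering at every level, and the meromorphic continuation of the resulting Dirichlet series is governed by the global dynamics of $R$ (this is precisely the content of \cite{T07tams} and \cite{ST-merozeta}), not by a single geometric series in $\cc^{-s/2}$. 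The ``candidate poles on an arithmetic progression'' only appear after one proves a factorization of $\zeta_\Delta$ through a dynamically defined zeta function of $R$, which is the hard analytic input and is not supplied. Second, your forward direction (interval case) is explicitly conditional on Conjecture~\ref{c1}, which is open; and note the conjecture concerns $K$ being homeomorphic to an interval, whereas your argument is phrased in terms of $\mathcal{J}_R$ being an interval --- identifying these two is again exactly Conjecture~\ref{c1}, so the reduction is circular.

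Third, and most seriously, the converse direction rests on the claim that gaps in $\sigma(\Delta)$ force a nonvanishing log-periodic oscillation in the eigenvalue counting function $N(\lambda)$, hence a nonreal pole with nonzero residue. This implication is asserted, not proved, and it is not obviously true: gaps are a statement about $\limsup_k \lambda_{k+1}/\lambda_k>1$, i.e.\ about ratios of consecutive eigenvalues, while complex dimensions concern the asymptotic oscillation of $N(\lambda)$ about its leading Weyl term. The paper's own introduction points out that these phenomena are logically independent in at least one direction (oscillations can arise from multiplicities with no gaps), and in the other direction the gap intervals $(A_k,B_k)$ constructed in Theorem~\ref{thm:crit} control where eigenvalues are \emph{absent} but say nothing about whether the counting function's fluctuations are asymptotically periodic in $\log\lambda$ with nonzero amplitude, which is what a nonreal pole requires. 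Your proposed lower bound for the oscillatory amplitude in terms of $\alpha_0/\beta_0$ and the Hausdorff measure of $\mathcal{J}_R$ is a heuristic with no supporting estimate, and the ``accidental cancellation'' of residues you name as the principal obstacle is exactly the point that must be resolved; naming it does not resolve it. As it stands, the statement remains a conjecture.
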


\begin{conjecture}
We conjecture that the results of \cite{St} are true under the conditions of Theorems~\ref{thm:eigs}  and~\ref{thm2} even without heat kernel estimates (see \cite{HK,BBK,BBKT,GT,Ki09} for more detail). 
\end{conjecture}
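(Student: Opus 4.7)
Since this is an open conjecture, the aim here is only to sketch a plausible strategy. The plan is to trace through the argument of \cite{St} for the Sierpinski gasket, isolate each step where the sub-Gaussian heat kernel bound enters, and replace that step with a direct spectral-decimation argument available under the hypotheses of Theorems~\ref{thm:eigs} and~\ref{thm2}. First I would catalogue the three roles played by the heat kernel in \cite{St}: (i) uniform $L^\infty$ bounds on normalized eigenfunctions $\varphi_\lambda$ of $\Delta$, (ii) control on the number of eigenvalues in each cluster allowed by the spectral gap, and (iii) quantitative tail estimates on partial Fourier sums.

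For (i), every eigenfunction arises as an explicit extension tower via Lemma~\ref{lemma:extension}, and hyperbolicity of $R$ on the totally disconnected Julia set (guaranteed by Theorem~\ref{thm2} in the gap case) should give uniform operator-norm bounds on each extension step, in terms of the distance from the relevant preimage to the exceptional set $E(M,M_0)$. Summing the resulting telescoping estimate across levels ought to yield $\|\varphi_\lambda\|_\infty \le C\|\varphi_\lambda\|_2$ with $C$ independent of $\lambda$. Item (ii) is already immediate from \cite[Proposition~4.1]{eigenpapers}, whose multiplicity formulas are purely combinatorial and do not use heat-kernel smoothing. For (iii), the totally disconnected structure of $\mathcal{J}_R$ forces the eigenvalues of $\Delta$ to cluster in a Cantor-like way with the ratios of successive gap endpoints bounded below away from one (essentially by the argument used inside the proof of Theorem~\ref{thm2}); this should produce a lacunary-type monotonicity for the partial Fourier sums $S_\Lambda u$ that substitutes for the heat-kernel tail bound and delivers the Fej\'er and Abel summability improvements of \cite{St}.

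The hardest step, and presumably the reason the statement remains a conjecture, is expected to be producing sharp uniform eigenfunction bounds inside large eigenvalue clusters, where the symmetry group $\F G$ can force substantial multiplicity (compare \cite{ADT,BK}). Cancellation among eigenfunctions in a single cluster has to be extracted without heat-kernel regularization; the natural route is to decompose each cluster into irreducible representations of $\F G$, control each irreducible piece by its own extension tower, and then show that this representation-theoretic control is uniform in $n$ across all branches $\phi_w$ of $R^{-1}$. If such uniformity can be established, the rest of the argument of \cite{St} should transfer essentially verbatim, and the heat-kernel input in \cite{HK,BBK,BBKT,GT,Ki09} becomes dispensable in this setting.
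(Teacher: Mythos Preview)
The paper offers no proof of this statement: it is stated purely as a conjecture, with no argument, sketch, or partial result attached. You correctly recognized this and presented a heuristic strategy rather than a claimed proof, which is the appropriate stance.

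Since there is no proof in the paper to compare against, the only honest assessment is whether your sketch is plausible as a research program. It is reasonable in outline: isolating the three uses of the heat kernel in \cite{St} and attempting to replace each by spectral-decimation input is the natural line of attack. Your identification of item~(ii) as already handled by \cite[Proposition~4.1]{eigenpapers} is accurate, and the gap-ratio argument for~(iii) is in the spirit of the proof of Theorem~\ref{thm2}. The genuine obstacle you flag for~(i)---uniform $L^\infty$ bounds on eigenfunctions without heat-kernel regularization, especially inside high-multiplicity clusters---is indeed the crux, and your proposed representation-theoretic decomposition under $\F G$ is a sensible first move, though nothing in the paper or the cited literature guarantees the required uniformity across all branches $\phi_w$. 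That uniformity is precisely what would need a new idea, so the conjecture remains open for good reason.
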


\begin{remark}
The case of the pentagasket, which is formed by an iterated function system that transforms a pentagon to five scaled pentagons which intersect each other at single points, suggests that there is a larger class of fractals which have gaps in their spectrum. There is no current version of spectral decimation which applies on the pentagasket and allows for explicit computations, however available numerical evidence suggests that the Laplacian on the pentagasket has gaps in it's spectrum \cite{ASST}.
\end{remark}

\section{An improved criterion for gaps in the spectrum} 

We conclude with a generalization of the criteria for gaps found in \cite{ZhGaps}, which also demonstrates where gaps are located. In this section  the members of the exceptional set are also called forbidden eigenvalues.

\begin{theorem}\label{thm:crit}
Suppose $b$ is a real number dominating all the forbidden eigenvalues and
suppose $R^{-1}[0, b]\subseteq [0, b]$. Let $\{\phi
_{j}\}_{j=0}^{L} $ be the partial inverses of $R$ ordered so that $\max \phi
_{i}\leq \min \phi _{i+1}$. Assume $\phi _{0}(b)<b$ and $\phi _{0}$ is
strictly convex. There exist gaps in the spectrum of the Laplacian if for
some $0\leq J<L$, 
\begin{equation*}
M\equiv \max \phi _{J}\mid _{[0, b]}<\min \phi _{J+1}\mid _{[0, b]}\equiv m.
\end{equation*}%
In fact, there are a bounded number of elements of the spectrum in the
intervals $(A_{k}, B_{k})$ where 
\begin{eqnarray*}
A_{k} &=&\cc^{k}\lim_{m\rightarrow \infty }\cc^{m}\phi
_{0}^{(m-1)}(M)=\lim_{m\rightarrow \infty }\cc^{m}\phi _{0}^{(m-k-1)}(M) \\
B_{k} &=&\cc^{k}\lim_{m\rightarrow \infty }\cc^{m}\phi
_{0}^{(m-1)}(m)=\lim_{m\rightarrow \infty }\cc^{m}\phi _{0}^{(m-k-1)}(m).
\end{eqnarray*}

\begin{proof}
Lemma 12 of \cite{ZhGaps} shows that the strict convexity of $\phi _{0}$ and the assumption that $\phi _{0}(b)<b$ imply that $A_{k}$ and $B_{k}$ are well defined and $A_{k}<B_{k}$. Of course, $B_{k}/A_{k}=B_{0}/A_{0}>1$.

Thus each eigenvalue is of the form $x=\cc^{i}\lim_{m\rightarrow \infty}\cc^{m}\phi _{0}^{(m-j)}\phi _{v}(z)$, where $i,\ j\in \mathbb{N}\cup\{0\},\ z\in E(M,M_0)$, $|v|=j\text{ and if }j\neq 0$, then $v=v_{j}\dots v_{1}$ where $v_{j}\neq 0$. Equivalently $x=\lim_{m\rightarrow\infty }\cc^{m}\phi _{0}^{(m-i-j)}\phi _{v}(z)$.

Case 1: $j\neq 0$, say $i+j=k+1$ for $k\in \mathbb{N}\cup \{0\}.$ If $v_{j}\geq J+1$, then $\phi _{v_{j}}\circ \cdots \circ \phi_{v_{1}}(z)=\phi _{v_{j}}(z^{\prime })\geq \min \phi_{J+1}.$ Hence 
\begin{equation*}
	x=\lim_{m\rightarrow \infty }\cc^{m}\phi _{0}^{(m-k-1)}\phi_{v_{j}}(z^{\prime })\geq \lim_{m\rightarrow \infty }\cc^{m}\phi_{0}^{(m-k-1)}(m)=B_{k}.
\end{equation*}%
We can also write 
\begin{equation*}
	x=\lim_{m\rightarrow \infty }\cc^{m}\phi _{0}^{(m-k-2)}\phi _{0}(\phi_{v_{j}}(z^{\prime }))\leq \lim_{m\rightarrow \infty }\cc^{m}\phi_{0}^{(m-k-2)}\max \phi _{0},
\end{equation*}%
and this is clearly bounded by 
\begin{equation*}
	\lim_{m\rightarrow \infty }\cc^{m}\phi _{0}^{(m-k-2)}(M)=A_{k+1}.
\end{equation*}%
Thus $x\in \lbrack B_{k}, A_{k+1}]$.

Otherwise, $v_{j}\leq J$. Then $\phi _{v_{j}}\circ \cdots \circ \phi_{v_{1}}(z)=\phi _{v_{j}}(z^{\prime })\leq \max \phi_{v_{j}}\leq \max \phi_{J}$, so 
\begin{equation*}
	x\leq \lim_{m\rightarrow \infty }\cc^{m}\phi _{0}^{(m-k-1)}(M)=A_{k}.
\end{equation*}%
Furthermore, because $v_{j}\geq 1,$ 
\begin{eqnarray*}
	B_{k-1} &=&\lim_{m\rightarrow \infty }\cc^{m}\phi _{0}^{(m-k-1)}\phi_{0}(m)\leq \lim_{m\rightarrow \infty }\cc^{m}\phi _{0}^{(m-k-1)}(\max \phi _{0}) \\
	&\leq &\lim_{m\rightarrow \infty }\cc^{m}\phi _{0}^{(m-k-1)}(\min \phi_{1})\leq x.
\end{eqnarray*}%
Hence $x\in [B_{k-1}, A_{k}]$. Consequently, if $x$ is of the first type then $x\not\in \cup (A_{k}, B_{k})$.

Case 2: $j=0.$ We first note that if $z=0,$ then $x=0$ and $x\not\in (A_{k}, B_{k})$, so we assume otherwise. The strict convexity of $\phi _{0}$ ensures 
\begin{equation*}
	\frac{\phi _{0}(x)}{x}\leq \frac{\phi _{0}(b)}{b}\leq \lambda <1,
\end{equation*}%
for some $\lambda $. Thus $\phi _{0}^{(n)}(z)\leq \lambda ^{n}z\rightarrow 0$ for all $z\in [0, b]$ and therefore we can choose $n$ such that $\phi _{0}^{(n)}(m)<z^{\ast }$, where $z^{\ast } :=\min E(M,M_0)$. We claim that $x=\lim_{m\rightarrow \infty }\cc^{m}\phi_{0}^{(m-i)}(z)\not\in (A_{k}, B_{k})$, if $i\leq k$ or $i>k+n$.

To prove the claim, we first suppose $i=k-s$ for some $s\geq 0$. Then 
\begin{equation*}
	x=\lim_{m\rightarrow \infty }\cc^{m}\phi _{0}^{(m-k-1)}\phi_{0}^{(s+1)}(z)\leq \lim_{m\rightarrow \infty }\cc^{m}\phi_{0}^{(m-k-1)}\phi _{0}(z)\leq A_{k}.
\end{equation*}%
If $i=k+n+s$ for some $s\geq 1$, then by the definition of $z^{\ast }$, 
\begin{equation*}
	x\geq \lim_{m\rightarrow \infty }\cc^{m}\phi _{0}^{(m-k-n-s)}(z^{\ast }).
\end{equation*}%
By our choice of $n$, the last term above dominates  
\begin{equation*}
	\lim_{m\rightarrow \infty }\cc^{m}\phi _{0}^{(m-k-n-s)}\phi_{0}^{(n)}(m)\geq \lim_{m\rightarrow \infty }\cc^{m}\phi_{0}^{(m-k-n-s)}\phi _{0}^{(n+s-1)}(m)=B_{k}.
\end{equation*}%
This proves our claim and therefore the only forbidden eigenvalues that can
lie in the interval $(A_{k}, B_{k})$ are those of the form $\lim_{m\rightarrow \infty }\cc^{m}\phi _{0}^{(m-i)}(z)$, where $k<i\leq k+n $, and $z\in E(M,M_0)$.

It follows that the theorem holds with the bound on the number of eigenvalues in any interval $(A_{k},B_{k})$ being at most $n|E(M,M_0)|$, where $|E(M,M_0)|$ is the cardinality of $E(M,M_0)$. Hence there exists a subinterval $(c_{k},\ d_{k})\subseteq (A_{k}, B_{k})$ containing no elements of the spectrum having $d_{k}-c_{k}\geq \frac{1}{N}|B_{k}-A_{k}|$.
\end{proof}
\end{theorem}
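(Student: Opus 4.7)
The plan is to combine the eigenvalue parameterization from Theorem~\ref{thm:eigs} with a careful case analysis based on where the inverse branches $\phi_j$ map $[0,b]$. The hypothesis $M<m$ produces a genuine ``hole'' in $\bigcup_j \phi_j([0,b])$, and iterating $\phi_0$ on this hole should produce a nested family of holes whose rescaled limits are the claimed gap intervals $(A_k,B_k)$.

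First I would verify that $A_k$ and $B_k$ are well defined and that $A_k<B_k$. The hypothesis that $\phi_0$ is strictly convex with $\phi_0(b)<b$ forces $\phi_0(x)/x$ to be bounded above by some $\lambda<1$ on $[0,b]$, so the sequences $\cc^m\phi_0^{(m-1)}(M)$ and $\cc^m\phi_0^{(m-1)}(m)$ are monotone and bounded; this is the content invoked as ``Lemma~12 of \cite{ZhGaps}''. Since $\phi_0$ is monotone and $M<m$, the inequality $A_k<B_k$ is immediate, and the ratio $B_k/A_k=B_0/A_0$ is the same constant $>1$ for every $k$, which is already the mechanism giving gaps once we show the interval is (almost) devoid of spectrum.

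Next, by Theorem~\ref{thm:eigs} every eigenvalue has the form $x=\lim_{m\to\infty}\cc^m\phi_0^{(m-i-j)}\phi_v(z)$, with $z\in\sigma(\Delta_0)\cup E(M,M_0)$ and $v=v_j\cdots v_1$ a word whose last letter (if any) satisfies $v_j\neq 0$. The core step is a trichotomy on this presentation. In the case $v_j\geq J+1$, I would apply $\phi_{v_j}$ and use $\phi_{v_j}(z')\geq \min\phi_{J+1}=m$ to push $x\geq B_k$, then use $\phi_0(\phi_{v_j}(z'))\leq \max\phi_0\leq M$ to push $x\leq A_{k+1}$, placing $x$ in $[B_k,A_{k+1}]$. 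In the case $1\leq v_j\leq J$ a symmetric estimate using $\max\phi_J=M$ and $\min\phi_1$ places $x$ in $[B_{k-1},A_k]$. Either way, $x$ lies outside $(A_k,B_k)$.

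The main obstacle, and the reason the theorem says ``a bounded number'' rather than ``no'' eigenvalues in each gap, is the residual case $j=0$, i.e. $x=\lim_{m\to\infty}\cc^m\phi_0^{(m-i)}(z)$ with $z\in E(M,M_0)$. Here there is no $\phi_{v_j}$ to force $x$ out of $(A_k,B_k)$, so I would instead exploit the geometric decay of $\phi_0^{(n)}$: strict convexity gives $\phi_0^{(n)}(z)\leq\lambda^n z$, so for $n$ large enough $\phi_0^{(n)}(m)<z^*:=\min E(M,M_0)$. Then splitting according to whether $i\leq k$ or $i>k+n$, the rescaled limit is respectively $\leq A_k$ (by monotonicity and one application of $\phi_0$) or $\geq B_k$ (using $\phi_0^{(n)}(m)<z^*$ so the $(n+s-1)$-fold $\phi_0$-iterate of $m$ is dominated by $z^*$). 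This leaves only the indices $k<i\leq k+n$, contributing at most $n|E(M,M_0)|$ eigenvalues in $(A_k,B_k)$. A pigeonhole on this bounded collection produces a sub-interval $(c_k,d_k)\subseteq (A_k,B_k)$ free of spectrum with $d_k-c_k\geq \frac{1}{N}(B_k-A_k)$, which combined with $B_k/A_k$ being a fixed constant greater than one completes the proof that gaps exist and identifies their location.
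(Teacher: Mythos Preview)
Your proposal is correct and follows essentially the same approach as the paper's proof: the same invocation of Lemma~12 of \cite{ZhGaps} for well-definedness of $A_k,B_k$, the same trichotomy on $v_j$ (with the identical estimates placing $x$ in $[B_k,A_{k+1}]$ or $[B_{k-1},A_k]$), and the same handling of the residual $j=0$ case via geometric decay of $\phi_0^{(n)}$ against $z^*=\min E(M,M_0)$, leaving at most $n|E(M,M_0)|$ exceptional points per interval. The only cosmetic difference is that the paper writes $z\in E(M,M_0)$ in the eigenvalue parameterization while you write $z\in\sigma(\Delta_0)\cup E(M,M_0)$, but this does not affect the argument.
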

 
\begin{remark}
Note that the proof actually shows that the only numbers of the form $c^{i}\lim_{m\rightarrow \infty }\cc^{m}\phi _{0}^{(m-j)}\phi _{v}(z),$ for $z\in \lbrack 0,b]$, which may be contained in $(A_{k},B_{k})$, are those equal to $\lim_{m\rightarrow \infty }\cc^{m}\phi _{0}^{(m-i)}(z)$, where $k<i\leq k+n.$
\end{remark}

Next, we show that the three theorems in \cite{ZhGaps} can be deduced from Theorem~\ref{thm:crit}.

\begin{corollary}
\cite[Thm. 13]{ZhGaps} Suppose $b$ is the largest forbidden eigenvalue, and that $R^{-1}[0,\ b]\subseteq \lbrack 0,\ b],$ $\phi _{1}$ is decreasing on $[0,b]$, $\phi _{0}$ is strictly convex and $\phi _{0}(b)<\phi _{1}(b)$. Then there are gaps in the spectrum of the Laplacian.
\end{corollary}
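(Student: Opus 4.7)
The plan is to deduce the corollary directly from Theorem~\ref{thm:crit} by specializing to $J = 0$. The bulk of the work consists of matching the corollary's hypotheses to those of the theorem and checking the one condition that is only implicit in the corollary's statement, namely $\phi_0(b) < b$.

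First I would dispose of the condition $\phi_0(b) < b$. Since $R^{-1}[0,b] \subseteq [0,b]$, every value $\phi_i(b)$ lies in $[0,b]$; in particular $\phi_1(b) \leq b$. Combined with the given inequality $\phi_0(b) < \phi_1(b)$, this yields $\phi_0(b) < b$ immediately, so the strict-contraction hypothesis on $\phi_0$ in Theorem~\ref{thm:crit} is satisfied.

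Next I would identify $M$ and $m$ explicitly. The branch $\phi_0$ fixes $0$ with $\phi_0'(0) = 1/R'(0) > 0$ (see Remark~\ref{remark-ds}), and is strictly convex on $[0,b]$ by assumption; strict convexity forces $\phi_0'$ to be nondecreasing on $[0,b]$, hence $\phi_0' > 0$ throughout and $\phi_0$ is strictly increasing there. Therefore $M = \max \phi_0|_{[0,b]} = \phi_0(b)$. Because $\phi_1$ is decreasing by hypothesis, $m = \min \phi_1|_{[0,b]} = \phi_1(b)$. The inequality $M < m$ in Theorem~\ref{thm:crit} then reduces to exactly the given $\phi_0(b) < \phi_1(b)$.

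With these verifications in hand, the remaining hypotheses of Theorem~\ref{thm:crit} ($b$ dominating the forbidden eigenvalues, $R^{-1}[0,b] \subseteq [0,b]$, and strict convexity of $\phi_0$) are already built into the corollary. Applying Theorem~\ref{thm:crit} with $J = 0$ produces intervals $(A_k, B_k)$ in the complement of $\sigma(\Delta)$ (up to the bounded number of exceptional eigenvalues) satisfying $B_k/A_k = B_0/A_0 > 1$, which forces $\limsup_k \lambda_{k+1}/\lambda_k > 1$ and hence yields gaps in the sense of Definition~\ref{def:gaps}. The only even mildly delicate step is the monotonicity of $\phi_0$; were strict convexity replaced by a weaker hypothesis this would be the main obstacle, but as stated the entire deduction amounts to a short verification that the hypotheses of the preceding theorem are met.
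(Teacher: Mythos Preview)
Your proof is correct and follows exactly the paper's approach: apply Theorem~\ref{thm:crit} with $J=0$, noting that $R^{-1}[0,b]\subseteq[0,b]$ gives $\phi_1(b)\le b$ and hence $\phi_0(b)<b$. The paper's proof is terser, leaving implicit the identification $M=\phi_0(b)$ and $m=\phi_1(b)$ that you spell out via the monotonicity of $\phi_0$ and $\phi_1$, but the argument is the same.
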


\begin{proof}
We apply the theorem with $J=0$. Note that $\phi _{1}(b)\leq b$, and hence $\phi_{0}(b)<b$.
\end{proof}

\begin{corollary}
\cite[Thm. 16]{ZhGaps} Suppose $\alpha <\beta $ are two consecutive forbidden eigenvalues. Let $b$ be the largest forbidden eigenvalue and suppose that $c\geq b$ satisfies $R^{-1}([0,c])\subseteq \lbrack 0,c]$\footnote{In the statement of Theorem 16 of \cite{ZhGaps} it states $R^{-1}[0,b]\subseteq [0,c]$, but it is clear from the proof that the assumption $R^{-1}[0,c]\subseteq  [0,c]$ was intended.}. Assume $\phi_{0}$ is strictly convex, $\phi_{0}(c)\leq \alpha $ and $\phi _{1}(x)\geq \beta $ for all $x\in [0, c].$ Then there are gaps in the spectrum
of the Laplacian.
\end{corollary}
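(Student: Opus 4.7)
The plan is to apply Theorem~\ref{thm:crit} directly, taking the $b$ of that theorem to be $c$ and taking $J = 0$. The corollary then reduces to verifying each hypothesis of Theorem~\ref{thm:crit} under the present assumptions, which I would do in two groups: first the general hypotheses, then the key gap-creating inequality $M < m$.

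First I would check the general hypotheses. That $c$ dominates all forbidden eigenvalues is immediate from $c \geq b$, where $b$ is the largest such. The set invariance $R^{-1}([0,c]) \subseteq [0,c]$ and the strict convexity of $\phi_0$ are given directly. For the remaining condition $\phi_0(c) < c$, observe that $\alpha$ and $\beta$ are forbidden eigenvalues with $\alpha < \beta \leq b \leq c$, and the hypothesis gives $\phi_0(c) \leq \alpha$; hence $\phi_0(c) \leq \alpha < c$.

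Next I would verify the crucial inequality $M \equiv \max \phi_0\mid_{[0,c]} < \min \phi_1\mid_{[0,c]} \equiv m$ with $J = 0$. Since $\phi_0$ is the branch of $R^{-1}$ with $0$ in its range, $\phi_0(0) = 0$; this, combined with the ordering convention $\max \phi_0 \leq \min \phi_1$ (which prevents $\phi_0$ from exceeding the values of $\phi_1$) and the strict convexity of $\phi_0$, forces $\phi_0$ to be increasing on $[0,c]$, so $M = \phi_0(c) \leq \alpha$. The hypothesis $\phi_1(x) \geq \beta$ for all $x \in [0,c]$ gives $m \geq \beta$. Since $\alpha < \beta$ as consecutive forbidden eigenvalues, $M \leq \alpha < \beta \leq m$.

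With every hypothesis of Theorem~\ref{thm:crit} now confirmed, that theorem produces gaps in the spectrum of the Laplacian, concluding the proof. There is no real obstacle: the only mildly subtle point is the identification $\max \phi_0\mid_{[0,c]} = \phi_0(c)$ via the monotonicity of the lowest branch, and this is essentially structural, being the one place where the strict convexity of $\phi_0$ plays an active role in the reduction (rather than simply being carried over as a hypothesis).
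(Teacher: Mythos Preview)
Your proof is correct and follows the same route as the paper's: apply Theorem~\ref{thm:crit} with $J=0$ and with $c$ playing the role of $b$, then read off the chain $\max\phi_0|_{[0,c]}\leq\alpha<\beta\leq\min\phi_1|_{[0,c]}\leq c$. One small cleanup: your justification that $\phi_0$ is increasing via the ordering convention is roundabout---it is cleaner to note that a strictly convex function attains its maximum at an endpoint, and since $\phi_0(0)=0\leq\phi_0(c)$ (non-negativity from $R^{-1}([0,c])\subseteq[0,c]$), the maximum is $\phi_0(c)\leq\alpha$.
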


\begin{proof}
This also follows easily from Theorem \ref{thm:crit} since
\begin{equation*}
	\max \phi _{0}\mid _{[0,c]}\leq \alpha <\beta \leq \min \phi _{1}\mid_{[0,c]}\leq c.
\end{equation*}
\end{proof}

\begin{corollary}
\cite[Thm. 15]{ZhGaps} Suppose $a<b$ are the two largest forbidden
eigenvalues, $R^{-1}[0,\ b]\subseteq [0, a],$ $\phi _{1}$ is
decreasing on $[0,b]$ and $\phi _{0}$ is strictly convex. Then there are
gaps in the spectrum of the Laplacian.
\end{corollary}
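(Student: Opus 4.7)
The plan is to derive this corollary as a direct consequence of Theorem~\ref{thm:crit} applied with $J=0$, so the task reduces to checking that the stated hypotheses imply those of Theorem~\ref{thm:crit}. Several of the required conditions are immediate: since $a<b$, the inclusion $R^{-1}[0,b]\subseteq[0,a]$ automatically gives $R^{-1}[0,b]\subseteq[0,b]$, while $b$ dominates all forbidden eigenvalues by hypothesis. Moreover $\phi_0(b)\in R^{-1}(\{b\})\subseteq[0,a]$, so $\phi_0(b)\leq a<b$, which delivers the condition $\phi_0(b)<b$; and strict convexity of $\phi_0$ is assumed outright.

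The substantive step is verifying the separation $\max\phi_0|_{[0,b]}<\min\phi_1|_{[0,b]}$. Because $R(0)=0$ and $R'(0)=\cc>1$ (Remark~\ref{remark-ds}), one has $\phi_0(0)=0$ and $\phi_0'(0)=1/\cc>0$; combined with strict convexity this keeps $\phi_0'$ positive throughout $[0,b]$, so $\phi_0$ is strictly increasing and $\max\phi_0|_{[0,b]}=\phi_0(b)$. The hypothesis that $\phi_1$ is decreasing on $[0,b]$ gives $\min\phi_1|_{[0,b]}=\phi_1(b)$. The ordering convention on the partial inverses yields $\phi_0(b)\leq\phi_1(b)$, and this is strict because $\phi_0([0,b])$ and $\phi_1([0,b])$ are distinct components of $R^{-1}[0,b]\subseteq[0,a]$, separated by a nonempty interval on which $R$ exceeds $b$.

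With every hypothesis of Theorem~\ref{thm:crit} now in force for $J=0$, I would invoke it directly to conclude that $\sigma(\Delta)$ has gaps, located in the intervals $(A_k,B_k)$ described there. The only delicate point in the whole argument is the strict separation of the first two branches: the labelling convention alone yields merely $\max\phi_0\leq\min\phi_1$, and the fact that $\phi_0$ is strictly increasing while $\phi_1$ is strictly decreasing, combined with the containment $R^{-1}[0,b]\subseteq[0,a]$ with $a<b$, is what rules out the non-generic possibility of the two branches coalescing at a critical value of $R$.
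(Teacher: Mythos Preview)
Your strategy---apply Theorem~\ref{thm:crit} on $[0,b]$ with $J=0$---is natural, and most of the verification is fine. The weak point is exactly where you flag it: the strict inequality $\phi_0(b)<\phi_1(b)$. Your justification asserts that $\phi_0([0,b])$ and $\phi_1([0,b])$ are ``distinct components of $R^{-1}[0,b]$, separated by a nonempty interval on which $R$ exceeds $b$,'' but nothing in the hypotheses forces this. If $\phi_0(b)=\phi_1(b)=p$, then $p$ is a critical point of $R$ with $R(p)=b$; the two branch images meet at $p$ and $R$ has a local maximum equal to $b$ there, so $R$ need not exceed $b$ anywhere between them. The containment $R^{-1}[0,b]\subseteq[0,a]$ only yields $p\le a$, which does not preclude such coalescence. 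Thus, as written, the proof does not establish the strict separation $M<m$ that Theorem~\ref{thm:crit} requires.

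The paper's proof sidesteps this by shifting the working interval from $[0,b]$ to $[0,a]$. On $[0,a]$ one has
\[
\max_{[0,a]}\phi_0=\phi_0(a)<\phi_0(b)\le\phi_1(b)<\phi_1(a)=\min_{[0,a]}\phi_1
\]
by strict monotonicity of each branch together with $a<b$, so the strict gap is automatic even when $\phi_0(b)=\phi_1(b)$. The price is that $b\notin[0,a]$, so $a$ no longer dominates the exceptional set; the paper therefore replaces $E(M,M_0)$ by $E(M,M_0)'=(E(M,M_0)\setminus\{b\})\cup\{\phi_0(b),\dots,\phi_L(b)\}\subseteq[0,a]$, observes that every eigenvalue can still be written in the required form using this modified set, and re-runs the argument of Theorem~\ref{thm:crit} with that substitution.
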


\begin{proof}
Let $E(M,M_0)^{\prime }=\{\beta _{t}\}$ consist of all the elements in $E(M,M_0)\setminus \{b\}$ together with the real numbers $\phi _{0}(b), \ \cdots ,  \phi _{L}(b).$ As $R^{-1}[0, b]\subseteq [0, a],$ $\phi _{j}(b)\leq a<b$ for all $j=0,\cdots ,L.$ Hence the largest member of $E(M,M_0)^{\prime }$ is $a$ and $R^{-1}[0, a]\subseteq R^{-1}[0,
b]\subseteq [0, a]$.

All eigenvalues of the Laplacian are of the form $c^{i}\lim_{m\rightarrow \infty }\cc^{m}\phi _{0}^{(m-j)}\phi _{v}(\beta_{t}),$ thus we may apply the same arguments as in Theorem \ref{thm:crit}, but with $E(M,M_0)^{\prime }$ taking the place of the forbidden eigenvalues $E(M,M_0)$.

By the monotonicity assumptions, $\phi _{0}(a)=\max\limits_{[0,a]} \phi _{0} $ and $\phi _{1}(a)=\min\limits_{[0,a]}\phi _{1}$. Moreover, $\phi _{0}(a)<\phi _{0}(b)\leq \phi _{1}(b)<\phi _{1}(a)\leq a,$ thus Theorem \ref{thm:crit} implies there are a bounded number of elements in $(A_{k}, B_{k}),$ where 
\begin{eqnarray*}
	A_{k} &=&\cc^{k}\lim_{m\rightarrow \infty }\cc^{m}\phi _{0}^{(m-1)}\phi
_{0}(a)\text{ and} \\
	B_{k} &=&\cc^{k}\lim_{m\rightarrow \infty }\cc^{m}\phi _{0}^{(m-1)}\phi
_{1}(a).
\end{eqnarray*}
\end{proof}

\end{document}